%
%

\documentclass[12pt]{amsart}
\usepackage{amsmath, amscd}
\usepackage{amsfonts}

\setlength{\textheight}{20cm} \textwidth16cm \hoffset=-2truecm
\begin{document}
\numberwithin{equation}{section}

\def\1#1{\overline{#1}}
\def\2#1{\widetilde{#1}}
\def\3#1{\widehat{#1}}
\def\4#1{\mathbb{#1}}
\def\5#1{\frak{#1}}
\def\6#1{{\mathcal{#1}}}

\newcommand{\w}{\omega}
\newcommand{\Lie}[1]{\ensuremath{\mathfrak{#1}}}
\newcommand{\LieL}{\Lie{l}}
\newcommand{\LieH}{\Lie{h}}
\newcommand{\LieG}{\Lie{g}}
\newcommand{\de}{\partial}
\newcommand{\R}{\mathbb R}
\newcommand{\FH}{{\sf Fix}(H_p)}
\newcommand{\al}{\alpha}
\newcommand{\tr}{\widetilde{\rho}}
\newcommand{\tz}{\widetilde{\zeta}}
\newcommand{\tk}{\widetilde{C}}
\newcommand{\tv}{\widetilde{\varphi}}
\newcommand{\hv}{\hat{\varphi}}
\newcommand{\tu}{\tilde{u}}
\newcommand{\tF}{\tilde{F}}
\newcommand{\debar}{\overline{\de}}
\newcommand{\Z}{\mathbb Z}
\newcommand{\C}{\mathbb C}
\newcommand{\Po}{\mathbb P}
\newcommand{\zbar}{\overline{z}}
\newcommand{\G}{\mathcal{G}}
\newcommand{\So}{\mathcal{S}}
\newcommand{\Ko}{\mathcal{K}}
\newcommand{\U}{\mathcal{U}}
\newcommand{\B}{\mathbb B}
\newcommand{\NC}{\mathcal N\mathcal C}
\newcommand{\oB}{\overline{\mathbb B}}
\newcommand{\Cur}{\mathcal D}
\newcommand{\Dis}{\mathcal Dis}
\newcommand{\Levi}{\mathcal L}
\newcommand{\SP}{\mathcal SP}
\newcommand{\Sp}{\mathcal Q}
\newcommand{\A}{\mathcal O^{k+\alpha}(\overline{\mathbb D},\C^n)}
\newcommand{\CA}{\mathcal C^{k+\alpha}(\de{\mathbb D},\C^n)}
\newcommand{\Ma}{\mathcal M}
\newcommand{\Ac}{\mathcal O^{k+\alpha}(\overline{\mathbb D},\C^{n}\times\C^{n-1})}
\newcommand{\Acc}{\mathcal O^{k-1+\alpha}(\overline{\mathbb D},\C)}
\newcommand{\Acr}{\mathcal O^{k+\alpha}(\overline{\mathbb D},\R^{n})}
\newcommand{\Co}{\mathcal C}
\newcommand{\Hol}{{\sf Hol}(\mathbb H, \mathbb C)}
\newcommand{\Aut}{{\sf Aut}(\mathbb D)}
\newcommand{\D}{\mathbb D}
\newcommand{\oD}{\overline{\mathbb D}}
\newcommand{\oX}{\overline{X}}
\newcommand{\loc}{L^1_{\rm{loc}}}
\newcommand{\la}{\langle}
\newcommand{\ra}{\rangle}
\newcommand{\thh}{\tilde{h}}
\newcommand{\N}{\mathbb N}
\newcommand{\kd}{\kappa_D}
\newcommand{\Ha}{\mathbb H}
\newcommand{\ps}{{\sf Psh}}
\newcommand{\Hess}{{\sf Hess}}
\newcommand{\subh}{{\sf subh}}
\newcommand{\harm}{{\sf harm}}
\newcommand{\ph}{{\sf Ph}}
\newcommand{\tl}{\tilde{\lambda}}
\newcommand{\gdot}{\stackrel{\cdot}{g}}
\newcommand{\gddot}{\stackrel{\cdot\cdot}{g}}
\newcommand{\fdot}{\stackrel{\cdot}{f}}
\newcommand{\fddot}{\stackrel{\cdot\cdot}{f}}
\def\v{\varphi}
\def\Re{{\sf Re}\,}
\def\Im{{\sf Im}\,}
\def\rk{{\rm rank\,}}
\def\rg{{\sf rg}\,}
\def\Gen{{\sf Gen}(\D)}
\def\Pl{\mathcal P}
\def\br{{\sf BRFP}}

\newtheorem{theorem}{Theorem}[section]
\newtheorem{lemma}[theorem]{Lemma}
\newtheorem{proposition}[theorem]{Proposition}
\newtheorem{corollary}[theorem]{Corollary}

\theoremstyle{definition}
\newtheorem{definition}[theorem]{Definition}
\newtheorem{example}[theorem]{Example}

\theoremstyle{remark}
\newtheorem{remark}[theorem]{Remark}
\numberwithin{equation}{section}

\title[Range at boundary points]{The range of holomorphic maps  at boundary  points}
\author[F. Bracci]{Filippo Bracci*}
\address{F. Bracci: Dipartimento Di Matematica, Universit\`{a} Di Roma \textquotedblleft Tor
Vergata\textquotedblright, Via Della Ricerca Scientifica 1,
00133, Roma, Italy. } \email{fbracci@mat.uniroma2.it}
\author[J. E. Forn\ae ss]{John Erik Forn\ae ss}
\address{J. E. Forn\ae ss: Department of Mathematical Sciences, Norwegian University of Science and Technology
7491 Trondheim, Norway} \email{john.fornass@math.ntnu.no}

\thanks{$^{*}$Partially supported by the ERC grant ``HEVO - Holomorphic Evolution Equations'' n. 277691.}

\begin{abstract}
We prove a boundary version of the open mapping theorem for  holomorphic maps between strongly pseudoconvex domains. That is, we prove  that the local image of a holomorphic map $f:D\to D'$ close to a boundary regular contact point $p\in \de D$  where the Jacobian is bounded from zero along normal non-tangential directions has to eventually contain every cone (and more generally every admissible region) with vertex at $f(p)$.
\end{abstract}

\maketitle

\section{Introduction}

Let $f:\D \to \D$ be holomorphic. If $\lim_{(0,1)\ni r\to 1}f(r)=1$ and $L:=\liminf_{\zeta\to 1}\frac{1-|f(\zeta)|}{1-|\zeta|}<+\infty$, the point $1$ is called a {\sl boundary regular fixed point} for $f$ and, thanks to the classical Julia-Wolff-Carath\'eodory theorem (see, {\sl e.g.} \cite{A}), it follows that $f$ has non-tangential limit $1$ at $1$ and $f'(\zeta)$ has non-tangential limit $L$ at $1$. In particular, $f$ is isogonal at $1$ and hence it maps angles in $\D$ with vertex at $1$ into angles with vertex $1$ and equal amplitude. Such a result has an interesting quantitative interpretation: every angle with vertex at $1$ is eventually contained in the local image of $f$ at $1$. This can be considered a boundary version of the open mapping theorem, and it is the best one can say about the range of one-dimensional mappings close to boundary points.

In higher dimension,  W. Rudin \cite{Ru} for the unit ball  and M. Abate \cite{A, A0, A1} for strongly (pseudo)convex domains generalized from a qualitative point of view the classical Julia-Wolff-Carath\'eodory theorem. Such a theorem can be seen as a description of the possible one-jets for holomorphic mappings from a strongly pseudoconvex domain into another, close to a boundary point which is non-tangentially mapped to another boundary point. In this optic, in \cite{BZ} a full description of all jets of such mappings, given some smooth extension, is provided. However, the question on how big  the local image of the map close to the boundary point really must be, is not answered from the higher version of the  classical Julia-Wolff-Carath\'eodory theorem. The aim of the present paper is precisely to give an answer to such a question.

In order to  state our result, we need to introduce some notations (see Sections \ref{theball} and \ref{spd} for details). Let $D,D'\subset \C^n$ be two bounded strongly pseudoconvex domains with smooth boundary. Let $p\in \de D$ and let $f: D \to D'$ be holomorphic. Assume
\[
\liminf_{z\to p}\frac{\hbox{dist}(f(z),\de D')}{\hbox{dist}(z,\de D)}<+\infty.
\]
Such a condition is  natural  and it is precisely the analogous of the one assumed in the classical Julia-Wolff-Carath\'eodory theorem. The point $p$ is then called a {\sl regular contact point} and Abate's version of the classical Julia-Wolff-Carath\'eodory theorem implies that there exists  $q\in \de D'$ such that $f$ has non-tangential limit $q$ at $p$. Without other assumption, the local image of $f$ close to $p$ can be very thin (although in Corollary \ref{p1} we prove that, for $D=D'=\B^n$ the unit ball of $\C^n$, the local range of $f$ at $p$ has to be Kobayashi asymptotic to every cone with vertex at $q$). This is not very surprising, since the same happens for points inside $D$ whenever the Jacobian of $f$ is zero.

We say that the point $p$ is a {\sl super-regular contact point} provided $\det df_z$ is bounded from zero when $z$ tends to $p$ non-tangentially along the normal direction to $\de D$ at $p$ (see Definition \ref{super}). In particular, if $f$ is of class $C^1$ at $p$, the point $p$ is  super-regular provided $\det df_p\neq 0$.

Let $\B(x,R)$ denote the Euclidean ball of center $x\in \C^n$ and radius $R>0$. Our main result is the following:

\begin{theorem}\label{main-intro}
Let $D, D'\subset \C^n$ be two bounded strongly pseudoconvex domains with smooth boundary. Let $f: D\to D'$ be holomorphic. Assume $p\in \de D$ is a super-regular contact point for $f$. Then there exists a point $q\in \de D'$ such that for every $\eta>0$ and for every  cone $C\subset D'$  with vertex at $q$ there exists $\delta>0$ such that $C\cap \B(q,\delta)\subset f(\B(p,\eta)\cap D)$.
\end{theorem}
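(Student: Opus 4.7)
The plan is to apply a non-isotropic scaling argument at the super-regular contact point $p$, in the spirit of Pinchuk's scaling method.

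\textbf{Step 1 (Localization).} By Abate's Julia-Wolff-Carath\'eodory theorem recalled in the introduction, $f$ admits a non-tangential limit $q\in\partial D'$ at $p$. Choose local biholomorphic coordinates near $p$ and near $q$ that osculate $\partial D$ and $\partial D'$ to second order, identifying the domains locally with the Siegel half-space $\mathbb H_n:=\{(z_1,z')\in\C\times\C^{n-1}:\Re z_1>|z'|^2\}$ and $p,q$ with $0$. Both the regular contact and super-regularity hypotheses survive such changes of coordinates, since the straightening biholomorphisms have non-degenerate Jacobians at the boundary points. Write $F$ for the transported map, defined on a neighborhood of $0$ in $\mathbb H_n$ with values in $\mathbb H_n$.

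\textbf{Step 2 (Scaling).} Introduce the non-isotropic dilations $\Psi_t(z_1,z'):=(tz_1,\sqrt{t}\,z')$, $t>0$, which are automorphisms of $\mathbb H_n$ fixing $0$ and preserving every admissible Koranyi region with vertex at $0$. Set $F_t:=\Psi_{1/t}\circ F\circ\Psi_t$, defined on an exhaustion of $\mathbb H_n$ as $t\to 0^+$. Kobayashi contraction together with the JWC distance estimate yields normality of $\{F_t\}$ on compact subsets of $\mathbb H_n$. Any subsequential limit $F_\infty:\mathbb H_n\to\mathbb H_n$ satisfies $F_\infty\circ\Psi_s=\Psi_s\circ F_\infty$ for every $s>0$, obtained by passing to the limit in $\Psi_{1/s}\circ F_t\circ\Psi_s=F_{st}$. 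Holomorphy combined with this weighted homogeneity forces
\[
F_\infty(z_1,z')=(a z_1+Q(z'),\ A z')
\]
for some $a\in\C$, a quadratic form $Q$ on $\C^{n-1}$, and a linear $A:\C^{n-1}\to\C^{n-1}$, so $\det dF_\infty\equiv a\det A$ is constant on $\mathbb H_n$. The Jacobian factors of $\Psi_{1/t}$ and $\Psi_t$ cancel to give $\det dF_t(1,0)=\det dF(\Psi_t(1,0))=\det dF(t,0)$, and the super-regularity hypothesis yields $|a\det A|\geq c>0$ in the limit. Thus $F_\infty$ is a biholomorphic embedding of $\mathbb H_n$ whose image is the subdomain $\{\Re y_1>\tilde Q(y')\}$ for the specific quadratic $\tilde Q(y'):=a|A^{-1}y'|^2+\Re Q(A^{-1}y')$; in particular, this image contains every sufficiently narrow admissible Koranyi region with vertex at $0$.

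\textbf{Step 3 (Open mapping and cone covering).} Given a cone $C\subset\mathbb H_n$ with vertex at $0$, a direct computation shows that $C\cap\B(0,\delta_0)\subset A$ for some narrow admissible Koranyi region $A\subset F_\infty(\mathbb H_n)$ and some $\delta_0>0$ (because along $C$, $|y'|^2/\rho(y)\to 0$ as $y\to 0$, so $C$ fits inside arbitrarily narrow Koranyi regions near $0$). Since $\Psi_s(A)=A$, decompose
\[
A\cap\B(0,\delta)=\bigcup_{0<s\leq t_0(\delta)}\Psi_s(K),\qquad K:=A\cap\{\Re z_1=1\},
\]
with $K$ a fixed compact slice and $t_0(\delta)\to 0$. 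Pick a compact $K'\subset\mathbb H_n$ with $K\subset F_\infty(\mathrm{int}\,K')$; Kobayashi contraction lets $K'$ be chosen uniformly across all subsequential limits. The several-variable argument-principle open mapping theorem applied to $F_{t_k}\to F_\infty$, combined with a standard compactness-over-subsequences argument, gives $K\subset F_s(K')$ for every sufficiently small $s>0$. Equivalently $\Psi_s(K)\subset F(\Psi_s(K'))$, and since the Euclidean diameter of $\Psi_s(K')$ is $O(\sqrt{s})$, $\Psi_s(K')\subset\B(0,\eta)\cap\mathbb H_n$ for $s$ small. Taking unions yields $A\cap\B(0,\delta)\subset F(\B(0,\eta)\cap\mathbb H_n)$ for $\delta$ small, and restricting to $C$ completes the proof.

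\textbf{Main obstacle.} The crux is Step 2, namely proving $\det dF_\infty\neq 0$. Normal compactness of $\{F_t\}$ and extraction of a subsequential limit are standard consequences of JWC and Kobayashi contraction, but the non-degeneracy of the limit is precisely the role of the super-regular contact hypothesis and is unavailable under mere JWC regularity. A secondary technical subtlety is that $F_\infty$ need not be a full automorphism of $\mathbb H_n$, since $Q$ may be nontrivial; one must verify that its image nevertheless contains the narrow Koranyi regions required to cover arbitrary cones near the vertex, which is the content of the last sentence of Step 2 and the first sentence of Step 3.
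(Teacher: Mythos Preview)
Your scaling approach is a genuinely different route from the paper's, and the overall strategy is reasonable, but Step 2 contains a real gap. The commutation $F_\infty\circ\Psi_s=\Psi_s\circ F_\infty$ does \emph{not} follow from passing to the limit in $\Psi_{1/s}\circ F_{t_k}\circ\Psi_s=F_{st_k}$ along a subsequence $t_k\to 0$: the left side converges to $\Psi_{1/s}\circ F_\infty\circ\Psi_s$, but the right side converges to some \emph{other} subsequential limit of $\{F_t\}$, which need not be $F_\infty$ unless you already know the full limit exists. Without the commutation, the polynomial form $(az_1+Q(z'),Az')$ is unjustified. Worse, Rudin's Julia--Wolff--Carath\'eodory theorem only gives \emph{restricted} $K$-limits for the relevant derivative entries (the tangential block $\langle df_z(e_h),e_k\rangle$ is merely bounded in Kor\'anyi regions), and your approach curves $t\mapsto(tz_1,\sqrt t\,z')$ with $z'\neq 0$ are parabolic (Kor\'anyi but not admissible), so you cannot deduce convergence of $F_t(z_1,z')$ from JWC either. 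Different subsequential limits can thus have different ``$A$'' and ``$Q$''. Step 3 then inherits the problem: the ``compactness-over-subsequences'' argument and the ``uniform $K'$'' require uniform control over the entire family of possible limits, which you have not established. A further technical slip in Step 1 is that the transported map $F$ does not take values in $\mathbb H_n$ but only in a domain osculating it to second order; the target has to be rescaled simultaneously.

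For comparison, the paper's proof avoids scaling limits entirely. It first reduces to the ball: an internally tangent Euclidean ball at $p$ on the source side, and the Diederich--Forn\ae ss--Wold exposing-point theorem on the target side to embed $D'$ into $\B^n$ with $q=e_1$. In the ball it proves a ``boundary Koebe'' result: for each $\zeta$ in a normal Stolz angle one conjugates $f$ by explicit parabolic and hyperbolic automorphisms of $\B^n$ to a map $g^\zeta$ with $g^\zeta(0)=0$, uses Rudin's JWC and the super-regularity hypothesis to obtain $|\det dg^\zeta_0|\ge c''$ \emph{uniformly} in $\zeta$, and applies a normal-families Koebe-type lemma to get a fixed Euclidean ball inside each $g^\zeta(\B(0,t'))$. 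Translated back, $f(\B(e_1,\eta)\cap\B^n)$ contains a Kobayashi ball of fixed radius about every $f(\zeta e_1)$; a separate estimate shows any admissible set at $e_1$ is eventually within that radius of $f(K(M,s)e_1)$, and the result follows. The paper thus trades your scaling limit for the explicit automorphism group of $\B^n$ plus the embedding theorem, and never needs to identify a limiting map or its algebraic form --- which is exactly where your argument breaks.
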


The conclusion of Theorem \ref{main-intro} holds more generally for the so-called {\sl admissible} sets, namely, for those sets in $D'$ which are asymptotic to cones in the Kobayashi distance (see, Section \ref{spd} and Theorem \ref{main}).

The proof of Theorem \ref{main-intro} is based on the corresponding theorem for the unit ball. In Section \ref{theball} we study the local image of holomorphic self-maps of the unit ball close to a regular and super-regular fixed point. In particular, the key result is  Theorem \ref{ball0}, a sort of boundary K\"obe $1/4$-theorem, where we prove that the image close to a super-regular fixed point must contain all the Kobayashi balls of a fixed radius which are centered at points of the image of an angle in the normal directions. With such a result at hands, we get Theorem \ref{main-intro} (and its general version for admissible sets Theorem \ref{main}) by suitably embedding strongly pseudoconvex domains into the unit ball, using a recent result by the second named author with E. F. Wold and K. Diederich, see Section \ref{spd}.

As an application of our result, in Theorem \ref{univalentone}, we prove that if $f:D\to D'$ is univalent and $x\in \de D$ is a super-regular contact point for $f$, then for every regular contact point $y\in\de D\setminus\{x\}$ it holds $f(x)\neq f(y)$. Contrarily to the one-dimensional case (where regular and super-regular points coincide), this is the best one can say. In fact, in  Example \ref{exa} we construct a univalent map of the unit ball having $(\pm 1,0,\ldots, 0)$ as regular contact points (but not super-regular) such that $f(1,0,\ldots,0)=f(-1,0,\ldots, 0)$.

\section{The unit ball}\label{theball}

For a point $a\in \C^n$ and $r>0$ let denote by $\B(a,r):=\{z\in \C^n: \|z-a\|<r\}$. As customary, we let $\B^n:=\B(0,1)$ and denote by $e_1=(1,0,\ldots, 0)$.

Let $\pi:\C^n \to \C^n$ be defined as $\pi(z)=\pi(z_1,\ldots, z_n)=(z_1,0,\ldots, 0)$.

Let $k_{\B^n}$ denote the Kobayashi distance in $\B^n$. Recall that
\[
k_{\B^n}(a,b)=\frac{1}{2}\log \frac{1+\|T_a(b)\|}{1-\|T_a(b)\|},
\]
where $T_a:\B^n\to \B^n$ is any automorphism such that $T_a(a)=0$. We will use such an explicit formula in case $b=\pi(a)$. A direct computation from the explicit form of the automorphisms of $\B^n$ (see, {\sl e.g.}, \cite[p. 358]{A} or below) gives
\begin{equation}\label{special}
\|T_a(\pi(a))\|^2=\frac{\|a-\pi(a)\|^2}{1-\|\pi(a)\|^2}.
\end{equation}

For a subset $A\subset \B^n$ and $z\in \B^n$ we let $k_{\B^n}(z,A)=\inf_{w\in A} k_{\B^n}(z,w)$.

Also, for $z\in \B^n$ and  $R>0$ we let $B_k(z,R)$ denote the {\sl Kobayashi ball} of center $z$ and radius $R$.

\begin{definition}
Let $f:\B^n\to \B^n$ be holomorphic. The point $e_1=(1,0,\ldots,0)$ is said to be a {\sl boundary regular fixed point} of $f$ if
\begin{enumerate}
  \item $\al_f(e_1):=\liminf_{z\to e_1}\frac{1-\|f(z)\|}{1-\|z\|}<+\infty$,
  \item $\lim_{(0,1)\ni r\to 1} f(re_1)=e_1$.
\end{enumerate}
\end{definition}

Let $R\geq 1$. The set $\{z\in \B^n: |1-z_1|\leq
R(1-\|z\|)\}$ is a {\sl Kor\'anyi region of vertex $e_1$ and amplitude $R$} (see
\cite[Section 2.2.3]{A}). In \cite[Section 5.4.1]{Ru}   a slightly different but essentially equivalent definition is given and used. In order not to excessively burden the notation, since we are only working at $e_1$, from now on, when we talk about Kor\'anyi regions, we will always mean Kor\'anyi regions of vertex $e_1$.

Let $f: \B^n \to \C^m$ be a holomorphic map. We say that $f$ has {\sl $K$-limit} $L$ at
$e_1$ -- and we write $K\hbox{-}\lim_{z\to e_1}f(z)=L$ -- if for
each sequence $\{z_k\}\subset \B^n$ converging to $e_1$ such that
$\{z_k\}$  belongs eventually to some Kor\'anyi region, it follows
that $f(z_k)\to L$.

Let $M>1$. We denote by $C(M):=\{z\in \B^n: \|e_1-z\|<M(1-\|z\|)\}$ a {\sl cone of vertex $e_1$ and amplitude $M$}. Also, let $M>1, s\in (0,1)$. We say that $f$ has {\sl
non-tangential limit} $L$ at $e_1$ and we write $\angle\lim_{z\to
e_1}f(z)=L$, if for each sequence $\{z_k\}\subset \B^n$ which is eventually contained in a cone of vertex $e_1$ and amplitude $M>1$, it follows that $f(z_k)\to L$.

In dimension one, Kor\'anyi regions and cones are one and the same and in fact,  studying boundary behavior of holomorphic mappings in the unit disc, it is natural to  consider non-tangential limits. However in higher dimension cones are contained in Kor\'anyi regions, but the first are ``too small'' and the latter ``too big'' and one is forced to consider intermediate sets, which can be tangent to the unit ball in complex tangent directions but are ``asymptotic'' in hyperbolic terms.

\begin{definition}
Let  $A\subset \B^n$ be such that $e_1\in \overline{A}$. We say that $A$ is {\sl admissible} at $e_1$ if for every $\epsilon>0$ there exists $\delta>0$ and $M>1$ such that
\begin{enumerate}
  \item $\pi(A\cap \B(e_1,\delta))\subset C(M)$
  \item $k_{\B^n}(z,\pi(z))<\epsilon$ for every $z\in A\cap \B(e_1,\delta)$.
\end{enumerate}
 \end{definition}

We say that $f$ has {\sl restricted $K$-limit} (or {\sl admissible limit})
$L$ at $e_1$ -- and we write $\angle_K\lim_{z\to e_1}f(z)=L$ -- if
for each sequence $\{z_k\}\subset \B^n$ converging to $e_1$ such that $\{z_k\}$ is admissible at $e_1$ it follows
that $f(z_k)\to L$.

Note that if a sequence $\{z_k\}\subset \B^n$ converging to $e_1$ is admissible at $e_1$, then  $\la z_k, e_1\ra\to 1$ non-tangentially in $\D$ and
\[
\lim_{k\to \infty}k_{\B^n}(z_k,\pi(z_k))=0.
\]
By \eqref{special}, this latter condition is equivalent to
\[
\frac{\|z_k-\la z_k,e_1\ra e_1\|^2}{1-|\la z_k,e_1\ra|^2}\to 0.
\]

One can show that
\[
K\hbox{-}\lim_{z\to e_1}f(z)=L\Longrightarrow \angle_K\lim_{z\to
e_1}f(z)=L\Longrightarrow\angle\lim_{z\to e_1}f(z)=L,
\]
but the converse to any of these implications  is not true in general.

\begin{lemma}\label{cono}
Every cone $C(M)$ in $\B^n$ with vertex $e_1$ and amplitude $M>1$ is admissible at $e_1$.
\end{lemma}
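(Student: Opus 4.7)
The plan is to verify conditions (1) and (2) of admissibility directly for $C(M)$, using the explicit formula \eqref{special}. Both checks reduce to one- and two-term manipulations of the defining cone inequality $\|e_1 - z\| < M(1-\|z\|)$.

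First, for condition (1), I would show that one may take $M' = M$ uniformly (in particular independently of $\epsilon$ and $\delta$). Writing $z = (z_1, z')$ with $z' = (z_2, \ldots, z_n)$, the identity $\|e_1 - z\|^2 = |1-z_1|^2 + \|z'\|^2$ gives $|1 - z_1| \le \|e_1 - z\|$. For $z \in C(M)$ this yields
$$|1 - z_1| < M(1 - \|z\|) \le M(1 - |z_1|),$$
the second inequality using $|z_1| \le \|z\|$. Since $\|e_1 - \pi(z)\| = |1 - z_1|$ and $1 - \|\pi(z)\| = 1 - |z_1|$, this is precisely the statement that $\pi(z) \in C(M)$.

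For condition (2), I would apply \eqref{special} at $z$. Since $\pi(z) = (z_1, 0, \ldots, 0)$, one has $\|z - \pi(z)\|^2 = \|z'\|^2$ and $\|\pi(z)\|^2 = |z_1|^2$, hence
$$\|T_z(\pi(z))\|^2 = \frac{\|z'\|^2}{1 - |z_1|^2}.$$
The cone inequality gives $\|z'\|^2 \le \|e_1 - z\|^2 < M^2(1 - \|z\|)^2$, while $|z_1| \le \|z\|$ gives the lower bound $1 - |z_1|^2 \ge 1 - |z_1| \ge 1 - \|z\|$. Combined with the elementary estimate $1 - \|z\| \le \|e_1 - z\|$, this yields
$$\|T_z(\pi(z))\|^2 < M^2 (1 - \|z\|) \le M^2 \|e_1 - z\|.$$
Because $k_{\B^n}(z, \pi(z)) = \tfrac{1}{2}\log\tfrac{1 + \|T_z(\pi(z))\|}{1 - \|T_z(\pi(z))\|}$ is a continuous increasing function of $\|T_z(\pi(z))\|$ vanishing at $0$, it suffices to take $\delta$ small (depending only on $M$ and $\epsilon$) to force $k_{\B^n}(z, \pi(z)) < \epsilon$ for all $z \in C(M) \cap \B(e_1, \delta)$.

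There is no genuine obstacle in this argument; the only care needed is to keep straight the directions of the inequalities between $|z_1|$, $\|\pi(z)\|$, $\|z\|$, and $\|e_1 - z\|$, in particular that $|z_1| \le \|z\|$ forces $1 - |z_1|^2 \ge 1 - \|z\|$, which is exactly the lower bound one needs for the denominator in \eqref{special}.
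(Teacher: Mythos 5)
Your proof is correct and follows essentially the same route as the paper: both reduce condition (2) via \eqref{special} to bounding $\|z-\pi(z)\|^2/(1-|z_1|^2)$ by a constant times $\|e_1-z\|$ using the cone inequality and $|z_1|\le\|z\|$ (your constant $M^2$ versus the paper's $M$ is immaterial). Your explicit verification of condition (1), showing $\pi(z)\in C(M)$, is a point the paper leaves implicit, and it is done correctly.
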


\begin{proof}
Let $C(M)$ be a cone with vertex $e_1$ and amplitude $M>1$.  Let $\epsilon>0$. We want to prove that there exists $\delta>0$ such that
\begin{equation*}
k_{\B^n}(z, \pi(z))<\epsilon\quad \forall z \in C(M)\cap \B(e_1,\delta).
\end{equation*}
By \eqref{special}, this is equivalent to prove  that for each $\eta>0$ there exists $\delta>0$ such that
\begin{equation}\label{dist6}
\frac{\|z-z_1e_1\|^2}{1-|z_1|^2} <\eta \quad \forall z\in C(M)\cap \B(e_1,\delta).
\end{equation}
But,
\[
\frac{\|z-z_1e_1\|^2}{1-|z_1|^2}=\frac{\|z-z_1e_1\|^2}{\|z-e_1\|^2}\frac{\|e_1-z\|}{1-|z_1|}\frac{\|z-e_1\|}{1+|z_1|}\leq M\frac{1-\|z\|}{1-|z_1|}\|e_1-z\|\leq M\|e_1-z\|,
\]
and therefore, if $\delta$ is sufficiently small, \eqref{dist6} follows.
\end{proof}

The following result is due to Rudin \cite{Ru}:

\begin{theorem}[Rudin]\label{RudinJWC}
Let $f:\B^n\to \B^n$ be holomorphic. Suppose that $e_1$ is a boundary regular fixed point for $f$. Then
$K\hbox{-}\lim_{z\to e_1} f(z)=e_1$. Moreover,
\begin{itemize}
\item[(1$^{'}$)] $\langle df_z(e_1), e_1\rangle$ and $\langle df_z(e_h), e_k\rangle$ are bounded in any Kor\'anyi region for $h,k=2,\ldots, n$.
\item[(1$^{''}$)] $\langle df_z(e_j), e_1\rangle/(1-z_1)^{1/2}$ is bounded  in any Kor\'anyi region for $j=2,\ldots, n$.
\item[(1$^{'''}$)] $(1-z_1)^{1/2}\langle df_z(e_1), e_j\rangle$  is bounded  in any Kor\'anyi region for $j=2,\ldots, n$.
\item[(2)] $\angle_K\lim_{z\to e_1}\frac{ 1-\la f(z),e_1\ra}{1-z_1}=\al_f(e_1)$,
\item[(3)] $\angle_K\lim_{z\to e_1}\langle df_z(e_1), e_1\rangle=\al_f(e_1)$,
\item[(4)] $\angle_K\lim_{z\to e_1}\langle df_z(e_j), e_1\rangle=0$ for $j=2,\ldots, n$.
\item[(5)] $\angle_K\lim_{z\to e_1}\frac{\la f(z), e_j\ra}{(1-z_1)^{1/2}}=0$ for $j=2,\ldots, n$.
\item[(6)] $\angle_K\lim_{z\to e_1}(1-z_1)^{1/2}\langle df_z(e_1), e_j\rangle=0$ for $j=2,\ldots, n$.
\end{itemize}
\end{theorem}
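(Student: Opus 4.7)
The plan is to derive all six statements from two ingredients: a higher-dimensional Julia lemma for $f$, and anisotropic Cauchy estimates on Kobayashi balls adapted to the scaling $(1-z_1)$ in the normal direction versus $(1-z_1)^{1/2}$ in the complex-tangential directions. The one-variable Julia-Wolff-Carath\'eodory theorem, applied to the slice function $g(\zeta):=\la f(\zeta e_1),e_1\ra$, provides the backbone for (2) and (3); the boundedness statements (1$'$), (1$''$), (1$'''$) follow from anisotropic Cauchy estimates on suitable polydiscs inside Kobayashi balls, and the admissible limits (4), (5), (6) come from combining these two tools.

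First, applying the one-dimensional Schwarz-Pick inequality to $g$ and passing to the liminf along a sequence realising $\al_f(e_1)$, one obtains the higher-dimensional Julia estimate
\[
\frac{|1-\la f(z),e_1\ra|^2}{1-\|f(z)\|^2}\le \al_f(e_1)\,\frac{|1-z_1|^2}{1-\|z\|^2},\qquad z\in\B^n,
\]
so $f$ sends each horosphere $E(e_1,R):=\{z:|1-z_1|^2<R(1-\|z\|^2)\}$ into $E(e_1,\al_f(e_1)R)$. A Kor\'anyi region of amplitude $M$ at $e_1$ is contained, near $e_1$, in $E(e_1,M)$, so on its image $|1-\la f(z),e_1\ra|^2\le \al_f(e_1)M(1-\|f(z)\|^2)\to 0$ as $\|f(z)\|\to 1$, forcing $f(z)\to e_1$ inside every Kor\'anyi region; this is $K$-$\lim_{z\to e_1}f(z)=e_1$.

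Next, for $z$ in a fixed Kor\'anyi region at $e_1$ of amplitude $R$, the explicit form of the automorphism $T_z$ shows that the Kobayashi ball $B_k(z,1)$ contains a polydisc (in the standard basis) of radii $\asymp(1-|z_1|)$ in the $e_1$-direction and $\asymp(1-|z_1|)^{1/2}$ in each $e_j$-direction, $j\ge 2$. On such a polydisc the Julia estimate yields $|1-\la f(w),e_1\ra|=O(1-|z_1|)$ and $|\la f(w),e_j\ra|=O((1-|z_1|)^{1/2})$, and one-variable Cauchy estimates applied in these coordinates give
\[
|\la df_z(e_1),e_1\ra|=O(1),\quad |\la df_z(e_h),e_k\ra|=O(1)\ (h,k\ge 2),
\]
\[
|\la df_z(e_j),e_1\ra|=O((1-|z_1|)^{1/2}),\quad |\la df_z(e_1),e_j\ra|=O((1-|z_1|)^{-1/2})\ (j\ge 2),
\]
which are precisely (1$'$), (1$''$), (1$'''$). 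For the non-tangential assertions, $g$ is a self-map of $\D$ with boundary regular fixed point $1$ and dilation $\al_g(1)=\al_f(e_1)$ (equality by sharpness of Julia along $re_1$), so the classical one-dimensional Julia-Wolff-Carath\'eodory theorem gives (2), (3) along the slice $\{re_1\}$. To promote these to restricted $K$-limits, note that for $\{z_k\}$ admissible at $e_1$, $\pi(z_k)\to e_1$ non-tangentially in the disc $\D e_1$, while the Cauchy estimates bound $f(z_k)-f(\pi(z_k))$ (and the analogous differences for $df$) by quantities that vanish by definition of admissibility. An identical slicing argument applied to $g_j(\zeta):=\la f(\zeta e_1),e_j\ra/(1-\zeta)^{1/2}$, whose boundedness on cones follows from Julia, yields (5); (4) and (6) are then obtained by differentiating (3) and (5) along admissible approaches and re-using the Cauchy estimates to control the remainders.

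The main obstacle is the anisotropic Cauchy step: one must rigorously justify both the polydisc shape of the Kobayashi ball at a Kor\'anyi-approach point and the matching anisotropic decay of $f$ on that polydisc, carefully keeping track of the sharp exponents $1$ and $1/2$ that reflect the non-isotropic geometry of $\de\B^n$. A secondary subtle point is the extension of the normalised slice $g_j$ past $\zeta=1$, which requires the tangential component $\la f(\zeta e_1),e_j\ra$ to vanish to half-integer order at $1$; this is itself a consequence of the horosphere estimate, but must be stated and used with care. Once these are in place, the remaining admissible-limit upgrades are routine density arguments.
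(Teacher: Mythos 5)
The paper itself gives no proof of this statement: it is quoted from Rudin \cite{Ru}, so your outline can only be measured against the classical argument, and in overall structure it does follow it (Julia's lemma, anisotropic Cauchy estimates on Kobayashi balls, the one-variable Julia--Wolff--Carath\'eodory theorem on the slice $\zeta\mapsto f(\zeta e_1)$, then promotion of slice limits to restricted $K$-limits via admissibility). The boundedness statements (1$'$), (1$''$), (1$'''$), the $K$-limit, and items (2), (3), (4) are reachable along the lines you indicate, modulo the polydisc step you yourself flag (note that the standard polydisc sits inside $B_k(z,1)$ in coordinates adapted to $z/\|z\|$, not to the standard basis; in a Kor\'anyi region the two frames differ by a rotation of size $(1-\|z\|)^{1/2}$ in tangential directions, and this transfer must be carried out). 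One local inaccuracy: the Julia inequality you display for all $z\in\B^n$ does not follow from the one-variable Schwarz--Pick inequality applied to the slice function $g$, which only controls $z\in\D e_1$; you need the Kobayashi/Schwarz--Pick contraction of $f$ itself between $z$ and a sequence $a_k\to e_1$ realising $\al_f(e_1)$ (Rudin's Theorem 8.5.3). Likewise the $K$-limit deduction should not invoke ``$\|f(z)\|\to1$''; rather, in a Kor\'anyi region $\frac{|1-z_1|^2}{1-\|z\|^2}\le M|1-z_1|\to0$ and $1-\|f(z)\|^2\le 2|1-\la f(z),e_1\ra|$, which together force $\la f(z),e_1\ra\to1$ and hence $f(z)\to e_1$.

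The genuine gap is in (5), and it propagates to (6). For $j\ge2$ the horosphere estimate only gives $|\la f(\zeta e_1),e_j\ra|=O(|1-\zeta|^{1/2})$, i.e.\ boundedness of $g_j(\zeta)=\la f(\zeta e_1),e_j\ra/(1-\zeta)^{1/2}$ on Stolz angles --- exactly what your ``half-integer vanishing'' remark records --- and a bounded function need not have any limit at $1$, let alone limit $0$; moreover $g_j$ is not a self-map of $\D$, so no Julia--Wolff--Carath\'eodory theorem applies to it and the slicing argument is not ``identical''. The missing idea is a cancellation: writing $F(\zeta)=f(\zeta e_1)$, the one-variable theorem applied to $F_1$ gives $\frac{1-|F_1(r)|^2}{1-r^2}\to\al_f(e_1)$, while the Julia inequality evaluated at $z=re_1$ gives $\frac{1-\|F(r)\|^2}{1-r^2}\ge\frac{1}{\al_f(e_1)}\bigl(\frac{|1-F_1(r)|}{1-r}\bigr)^2\to\al_f(e_1)$; since also $\frac{1-\|F(r)\|^2}{1-r^2}\le\frac{1-|F_1(r)|^2}{1-r^2}$, the two limits coincide and therefore $\frac{\sum_{j\ge2}|F_j(r)|^2}{1-r^2}\to0$. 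Only this yields the radial limit $0$ required in (5); (6) then follows by a Cauchy estimate on discs of radius comparable to $1-r$ inside a larger Stolz angle (or by your second-derivative remainder argument), after which the admissible-promotion step you describe applies. Without this (or an equivalent device, as in Rudin's own proof), your sketch establishes the $O$-estimates but not the vanishing restricted limits in (5) and (6).
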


\subsection{Automorphisms of $\B^n$}

Automorphisms of the unit ball are linear fractional maps of $\C^n$ which maps $\B^n$ onto $\B^n$ (see, {\sl e.g.} \cite[Section 2.2]{Ru}). In particular they extend holomorphically past the boundary of the unit ball and they map the intersection of the unit ball with a given complex line onto the intersection of the unit ball with another complex line (for details see \cite{BCD}). In what follows we need to consider two special families of automorphisms of the unit ball. Namely, we will use a particular type of {\sl hyperbolic automorphisms} given by
\begin{equation}
\label{hyper}
\Phi_{t_0} (z) = \frac{\left(\cosh \! t_0 \; z_1+\sinh \!{t_0},z_2,\ldots,z_n\right)}{\sinh \! t_0 \;z_1+\cosh\! t_0}
\end{equation}
where $t_0 \in \R \setminus \{ 0 \}$. A direct computation shows that $\Phi_{t_0}(e_1)=e_1$, that $\Phi_{t_0}(-e_1)=-e_1$ (and hence $\Phi_{t_0}(\D\times \{0\})=\D\times \{0\}$) and $\al_{\Phi_{t_0}}(e_1)=e^{-2t_0}$. Moreover, $\det d(\Phi_{t_0})_{0}=(\cosh t_0)^{-(n+1)}$.  Note also that given $r\in (-1,1)$, setting $t_0=\frac{1}{2}\log \frac{1+r}{1-r}$, it follows that $\Phi_{t_0}(0)=re_1$.

Also, we will make use of the  {\sl parabolic } automorphisms of $\B^n$. Recall that an  automorphism $T$ of the unit ball  fixing $e_1$ and with $\al_T(e_1)=1$ is called a {\sl parabolic automorphism} (fixing $e_1$).

Let $\Ha^n:=\{(w_1,w'')\in \C\times \C^{n-1}: \Re w_1 >\|w''\|^2\}$ be the {\sl Siegel domain}. The (generalized) {\sl Cayley transform} $C: \B^n \to \Ha^n$ defined by $C(z_1,z'')=(1+z_1,z'')/(1-z_1)$, where as usual we set $z''=(z_2,\ldots, z_n)$, is a biholomorphism. By \cite[Proposition 4.3]{BCD}, if $T$ is any parabolic automorphism  of $\B^n$ fixing $e_1$ then
\begin{equation}\label{para}
C\circ T \circ C^{-1}(w_1,w'')=(w_1+2\langle Uw'',a\rangle +c, Uw''+a),
\end{equation}
where $U$ is a $(n-1)\times (n-1)$ unitary matrix, $a\in \C^{n-1}$ and $\Re c=\|a\|^2$.

Given $z_0\in \B^n$, let $R(z_0):=\frac{|1-\langle z_0,e_1\rangle |^2}{1-\|z_0\|^2}$. Then, there exists a parabolic automorphism $T_{z_0}$ such that $T_{z_0}(z_0)=\frac{1-R(z_0)}{1+R(z_0)}e_1$. Indeed, set $w_0=C(z_0)$, $U={\sf id}$, $a=-w_0''$, $\Re c=\|w_0''\|^2$ and $\Im c=-\Im (w_0)_1$ in the right-hand side of \eqref{para} and call $\tilde{T}_{z_0}(w)$ such a map. Then
\begin{equation*}
\begin{split}
T_{z_0}(z_0)&=C^{-1}(\tilde{T}_{z_0}(C(z_0))=C^{-1}(\Re (w_0)_1-\|w_0''\|^2, 0,\ldots, 0)\\&=C^{-1}\left(\frac{1-\|z_0\|^2}{|1-\langle z_0, e_1\rangle|^2}, 0,\ldots, 0\right)=\frac{1-R(z_0)}{1+R(z_0)} e_1.
\end{split}
\end{equation*}
A direct computation (or see \cite[eq. (4.2)]{BCD}) shows that for any parabolic automorphism $T$ of the unit ball $\det dT_{e_1}=1$.

\subsection{Range close to boundary regular fixed points}

 Define the {\sl Stolz angle}
\[
K(M,s):=\{\zeta\in \D: \frac{|1-\zeta|}{1-|\zeta|}<M, |1-\zeta|<s\}.
\]
Notice that $K(M,s)$ is the intersection of  a Kor\'anyi region/a cone of vertex $e_1$ and amplitude $M$ with the slice $\C e_1$ and the Euclidean ball $\B(e_1,s)$. In particular, if $g:\B^n\to \C^m$ is a holomorphic map such that $\angle\lim_{z\to e_1}g(z)=L$, then $\lim_{K(M,s)\ni \zeta\to 1}g(\zeta e_1)=L$.

As a matter of notation, let $K(M,s)e_1:=\{z\in \B^n: z=\zeta e_1, \zeta \in K(M,s)\}$

\begin{proposition}\label{p0}
Let $f:\B^n\to \B^n$ be holomorphic. Assume that $e_1$ is a boundary regular fixed point of $f$. Then for every $\epsilon>0$, $M>1$, $t\in (0,1)$   there exists $s=s(\epsilon, M,t)>0$ such that $k_{\B^n}(z,f(K(M,t)e_1)))<\epsilon$ for all $z\in K(M,s)e_1$.
\end{proposition}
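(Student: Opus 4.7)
The plan is to reduce the assertion to a one-dimensional question on the slice $\D e_1\subset \B^n$ and then apply the classical Julia--Wolff--Carath\'eodory theorem in the disc. First I introduce the one-dimensional map $g:\D\to\D$ defined by $g(\zeta):=\la f(\zeta e_1),e_1\ra$. By Theorem~\ref{RudinJWC}(2) applied to $f$, the point $1$ is a boundary regular fixed point of $g$ with boundary dilation coefficient $L=\al_f(e_1)<+\infty$, and the classical one-variable Julia--Wolff--Carath\'eodory theorem gives $(1-g(\zeta))/(1-\zeta)\to L$ as $\zeta\to 1$ non-tangentially. Moreover, Theorem~\ref{RudinJWC}(5) yields $\la f(\zeta e_1),e_j\ra=o((1-\zeta)^{1/2})$ for $j\geq 2$ as $\zeta\to 1$ in any Stolz angle; substituting this into formula~\eqref{special} gives
\[
k_{\B^n}\bigl(f(\zeta e_1),\, g(\zeta) e_1\bigr)\longrightarrow 0\qquad\text{as }\zeta\to 1\text{ in }K(M,t).
\]
Since $\D e_1$ is a totally geodesic slice of $\B^n$, the restriction of $k_{\B^n}$ to $\D e_1\times \D e_1$ coincides with $k_\D$. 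Combining these observations, it suffices to prove the purely one-dimensional claim: for every $\epsilon>0$, $M>1$, $t\in(0,1)$ there exists $s>0$ such that $k_\D(\zeta^*, g(K(M,t)))<\epsilon$ for every $\zeta^*\in K(M,s)$.

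For the one-variable claim, given $\zeta^*\in K(M,s)$ I would take as natural candidate
\[
\sigma:=1-\frac{1-\zeta^*}{L},
\]
for which $1-L(1-\sigma)=\zeta^*$ exactly, so by JWC $g(\sigma)=\zeta^*+o(|1-\zeta^*|)$ as $\zeta^*\to 1$. Together with the lower bound $1-|g(\sigma)|\geq|1-\zeta^*|/M$ valid for $\sigma\in K(M,t)$, this forces $k_\D(g(\sigma),\zeta^*)\to 0$. Choosing $s<Lt$ ensures $|1-\sigma|<t$. The main obstacle, and the delicate point of the argument, is to guarantee that $\sigma$ lies in $K(M,t)$ with the \emph{same} amplitude $M$: a direct expansion shows
\[
\frac{|1-\sigma|}{1-|\sigma|}=\frac{|1-\zeta^*|}{\Re(1-\zeta^*)}\bigl(1+O(|1-\zeta^*|)\bigr),
\]
and while $|1-\zeta^*|/\Re(1-\zeta^*)\leq|1-\zeta^*|/(1-|\zeta^*|)<M$, the $O(|1-\zeta^*|)$ correction can push the ratio slightly above $M$ when $L<1$. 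I would fix this by a small angular adjustment, rotating $\sigma$ towards the real axis by an angle of order $|1-\zeta^*|$; such a correction keeps $\sigma$ in $K(M,t)$ while changing $g(\sigma)$ by only $O(|1-\zeta^*|)$ in $k_\D$, which is negligible as $\zeta^*\to 1$.

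Finally, for $z=\zeta e_1\in K(M,s)e_1$ with $s$ sufficiently small, one selects $\sigma\in K(M,t)$ as above with $k_\D(g(\sigma),\zeta)<\epsilon/2$; the reduction above gives $k_{\B^n}(f(\sigma e_1),g(\sigma) e_1)<\epsilon/2$, so the triangle inequality yields
\[
k_{\B^n}\bigl(z,f(\sigma e_1)\bigr)\leq k_\D(\zeta,g(\sigma))+k_{\B^n}(g(\sigma)e_1,f(\sigma e_1))<\epsilon,
\]
whence $k_{\B^n}(z, f(K(M,t)e_1))<\epsilon$, as required.
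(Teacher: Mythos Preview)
Your argument is essentially correct and shares the same architecture as the paper's proof: both pass to the totally geodesic slice $\D e_1$, both use Theorem~\ref{RudinJWC}(5) together with \eqref{special} to show that $k_{\B^n}(f(\zeta e_1),\pi(f(\zeta e_1)))\to 0$ as $\zeta\to 1$ in a Stolz angle, and both finish with a one-dimensional estimate on $g(\zeta)=\la f(\zeta e_1),e_1\ra$. The only real difference is in the one-dimensional step. The paper first normalises to $\al_f(e_1)=1$ by precomposing $f$ with a hyperbolic automorphism $\Phi$ of the form \eqref{hyper}; once $L=1$ the obvious candidate is $\sigma=\zeta^*$ itself, and one checks directly that $k_\D(\zeta,g(\zeta))\to 0$ via the quotient \eqref{divis}, with no angular adjustment needed. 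Your choice $\sigma=1-(1-\zeta^*)/L$ is exactly the first-order Taylor expansion at $1$ of what $\Phi^{-1}$ does on the slice $\D e_1$; because $\Phi$ is a genuine M\"obius automorphism it carries $K(M,s')$ \emph{exactly} onto $K(M,s)$, which is precisely why the paper's route sidesteps the rotation trick you are forced into. Your angular-correction argument can be made to work (the overshoot of the ratio is $O(|1-\zeta^*|)$, and near the edge of the angle a rotation of that order suffices), but as written both that step and the bound ``$1-|g(\sigma)|\geq |1-\zeta^*|/M$'' (the correct constant depends on $M$ and $L$, not just $M$) would need a few more lines to be airtight. The hyperbolic-automorphism reduction is the cleaner packaging of the same idea.
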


\begin{proof}
First of all, we assume that $\al_f(e_1)=1$. For a given $s\in (0,1)$ consider the complex curve $K(M,s)\ni\zeta\mapsto  f(\zeta e_1)\in \B^n$. By Theorem \ref{RudinJWC}, it follows that such a curve is continuous up to the closure of $K(M,s)$.

We claim that, for each $M>1$ and each $\epsilon>0$ there exists $s_0=s_0(M,\epsilon)\in (0,1)$ such that for all fixed $s\in (0,s_0)$ it follows
\begin{equation}\label{dist}
k_{\B^n}(f(\zeta e_1), \pi(f(\zeta e_1)))<\frac{\epsilon}{2}\quad \forall \zeta \in K(M,s).
\end{equation}
Thanks to \eqref{special}, claim \eqref{dist} is equivalent to the claim that
for each $M>1$ and each $\delta>0$ there exists $s_0=s_0(M,\delta)\in (0,1)$ such that for all fixed $s\in (0,s_0)$ it holds
\begin{equation}\label{dist2}
\frac{\|f(\zeta e_1)-\pi(f(\zeta e_1))\|^2}{1-\|\pi(f(\zeta e_1))\|^2} <\delta\quad \forall \zeta \in K(M,s).
\end{equation}
In order to prove this, we write
\[
\begin{split}
\frac{\|f(\zeta e_1)-\pi(f(\zeta e_1))\|^2}{1-\|\pi(f(\zeta e_1))\|^2}&=\frac{\|f(\zeta e_1)-\pi(f(\zeta e_1))\|^2}{|(1-\zeta)^{1/2}|^2}\cdot\frac{|1-\zeta|}{|1-f_1(\zeta e_1)|}\cdot\frac{|1-f_1(\zeta e_1)|}{1-|f_1(\zeta e_1)|^2}\\&=:h_1(\zeta)\cdot h_2(\zeta)\cdot h_3(\zeta).
\end{split}
\]
By Theorem \ref{RudinJWC}.(2) and (5), the functions $\overline{K(M,s)}\ni \zeta\mapsto h_1(\zeta)$ and $K(M,s)\ni \zeta\mapsto h_2(\zeta)$ are (uniformly) continuous on the compact set $\overline{K(M,s)}$. Moreover, $\lim_{K(M,s)\ni \zeta\to 1} h_1(\zeta)=0$, while $\lim_{K(M,s)\ni \zeta\to 1} h_2(\zeta)=1$. As for the function $h_3$, we notice that the function $g(\zeta):=f_1(\zeta e_1)$ is a holomorphic self-map of the unit disc, having $1$ as a boundary regular fixed point because of Theorem \ref{RudinJWC}.(2) and since
\[
\al_g(1)\leq \liminf_{r\to 1} \frac{1-|g(r)|}{1-r} \leq \liminf_{r\to 1} \frac{|1-f_1(re_1)|}{1-r}=1.
\]
Therefore for any $\zeta\in \D$ such that $\frac{|1-\zeta|}{1-|\zeta|}<M$,
\[
\frac{|1-g(\zeta)|}{1-|g(\zeta)|}=\frac{|1-g(\zeta)|}{|1-\zeta|}\frac{|1-\zeta|}{1-|\zeta|}\frac{1-|\zeta|}{1-|g(\zeta)|}
\]
is bounded from above by a constant depending only on $M$  because of the classical Julia-Wolff-Carath\'eodory (that is, Theorem \ref{RudinJWC} for $n=1$). Hence, $h_3(\zeta)$ is bounded from above. Thus, \eqref{dist2} follows.

Next, we claim that for each $M>1$ and each $\epsilon>0$ there exists $s_1=s_1(M,\epsilon)\in (0,1)$ such that for all fixed $s\in (0,s_1)$ it follows
\begin{equation}\label{dist3}
k_{\B^n}(\zeta e_1, \pi(f(\zeta e_1)))<\frac{\epsilon}{2}\quad \forall \zeta \in K(M,s).
\end{equation}
Let $h(\zeta):=f_1(\zeta e_1)$. Thus $h:\D \to \D$ is a holomorphic function and, as before, $1$ is a boundary regular fixed point of $h$. Moreover, from Theorem \ref{RudinJWC}.(2) and since we assumed $\al=1$, it follows that $\al_h(1)=1$. Now,
\[
k_{\B^n}(\zeta e_1, \pi(f(\zeta e_1)))=k_{\D}(\zeta,h(\zeta))=\frac{1}{2}\log\frac{1+\left|\frac{\zeta-h(\zeta)}{1-\overline{\zeta}h(\zeta)} \right|}{1-\left|\frac{\zeta-h(\zeta)}{1-\overline{\zeta}h(\zeta)} \right|},
\]
hence \eqref{dist3} follows as soon as we prove that for each $M>1$ and each $\delta>0$ there exists $s_1=s_1(M,\delta)\in (0,1)$ such that for all fixed $s\in (0,s_1)$ it holds
\begin{equation}\label{dist4}
\left|\frac{\zeta-h(\zeta)}{1-\overline{\zeta}h(\zeta)} \right|<\delta\quad \forall \zeta \in K(M,s).
\end{equation}
Let $d(\zeta):=(1-h(\zeta))/(1-\zeta)$ and $a(\zeta)=(1-\overline{\zeta})/(1-\zeta)$. Thus
\begin{equation}\label{divis}
\left|\frac{\zeta-h(\zeta)}{1-\overline{\zeta}h(\zeta)}\right|=\left|\frac{1-d(\zeta)}{a(\zeta)+\overline{\zeta}d(\zeta)}\right|.
\end{equation}
By Theorem \ref{RudinJWC} for $n=1$ it follows that $d(\zeta)$ is (uniformly) continuous on the compact set $\overline{K(M,s)}$ and $d(\zeta)\to 1$ as $\overline{K(M,s)}\ni \zeta \to 1$. Moreover, if $s<1/2$ then $\Re \zeta>1/2$ and then $\Re a(\zeta)>-1/4$. Hence, if $s$ is sufficiently small, it follows that the real part of the denominator of the right hand side of \eqref{divis} is bounded from below by a constant $c>0$ for all $\zeta\in K(M,s)$. As a result, if we choose $s$ sufficiently small, \eqref{dist4} holds.

Now, fix $\epsilon>0$, $M>1$ and $t\in (0,1)$.  Choose $s_0, s_1$ such that \eqref{dist}, \eqref{dist3} hold, and let $s:=\min\{t,s_0,s_1\}$. Let $z\in K(M,s)e_1$, hence
\[
k_{\B^n}(f(K(M,s)e_1),z)\leq k_{\B^n}(f(z), z)\leq k_{\B^n}(z, \pi(f(z)))
+k_{\B^n}(\pi(f(z)), f(z))<\epsilon.
\]
To end up the proof, we need to consider the case $\al:=\al_f(e_1)\neq 1$. Let $\Phi$ be an {\sl hyperbolic} automorphism of $\B^n$ of type \eqref{hyper} with $\al_{\Phi}(e_1)=\al^{-1}$. In particular, since the first component is a M\"obius transformation of $\C$, it follows that for all $s\in (0,1)$ there exists $s'(s)\in (0,1)$ such that $\Phi(K(M,s'(s))e_1)=K(M,s)e_1$. Using Theorem \ref{RudinJWC} it is easy to see that the holomorphic self-map $g:=f\circ \Phi$ of $\B^n$ has the property that $e_1$ is a boundary regular fixed point of $g$ and $\al_g(e_1)=1$. Let $\epsilon>0, t\in (0,1)$ and $M>1$. Let $t'(t)\in (0,1)$ be such that $\Phi(K(M,t'(t))e_1)=K(M,t)e_1$. We apply the result already proven to $g$, finding
$s>0$ (in fact, for what we have proven, $s=t'(t)$) such that $k_{\B^n}(z,g(K(M,t'(t))e_1))<\epsilon$ for all $z\in K(M,s)$. By construction
$g(K(M,t'(t))e_1)=f(\Phi(K(M,t'(t))e_1)= f(K(M,t)e_1)$ and therefore $k_{\B^n}(z,f(K(M,t)e_1)<\epsilon$ for all $z\in K(M,s)e_1$, as wanted.
\end{proof}

\begin{corollary}\label{p1}
Let $f:\B^n\to \B^n$ be holomorphic. Assume that $e_1$ is a boundary regular fixed point of $f$. Then for every $\epsilon>0$ and $t\in (0,1)$ and for every  $A\subset \B^n$ admissible at $e_1$ there exists $\delta>0$ such that $k_{\B^n}(z,f(K(M,t)e_1))<\epsilon$ for all $z\in A\cap \B(e_1,\delta)$.
\end{corollary}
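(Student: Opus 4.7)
The plan is to derive Corollary \ref{p1} from Proposition \ref{p0} by replacing a generic admissible point $z\in A$ with its ``vertical'' projection $\pi(z)=z_1 e_1$ onto the slice $\C e_1$, and then chaining the two resulting Kobayashi-distance bounds via the triangle inequality. The definition of admissible set is tailored precisely to make this reduction work.

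Concretely, I would first invoke admissibility of $A$ at $e_1$ with tolerance $\epsilon/2$ to obtain $\delta_0>0$ and $M>1$ such that for every $z\in A\cap \B(e_1,\delta_0)$ one has $\pi(z)\in C(M)$ and $k_{\B^n}(z,\pi(z))<\epsilon/2$. This is the $M$ I would plug into the right-hand side $K(M,t)e_1$ of the statement. Next, Proposition \ref{p0}, applied with inputs $\epsilon/2$, $M$ and $t$, yields an $s\in(0,1)$ such that $k_{\B^n}(w,f(K(M,t)e_1))<\epsilon/2$ for every $w\in K(M,s)e_1$.

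The final step is to set $\delta:=\min\{\delta_0,s\}$ and verify that for $z\in A\cap \B(e_1,\delta)$ the projected point $\pi(z)=z_1 e_1$ genuinely lies in $K(M,s)e_1$: the amplitude inequality $|1-z_1|<M(1-|z_1|)$ comes from $\pi(z)\in C(M)$, while the estimate $|1-z_1|\leq \|z-e_1\|<s$ supplies the ``size'' condition defining the Stolz angle $K(M,s)$. Triangle inequality then closes the argument,
\[
k_{\B^n}(z,f(K(M,t)e_1))\leq k_{\B^n}(z,\pi(z))+k_{\B^n}(\pi(z),f(K(M,t)e_1))<\epsilon.
\]

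I do not expect any serious obstacle: the corollary is essentially a formal consequence of Proposition \ref{p0} together with the definition of admissibility. The only point requiring care is the bookkeeping check that $\pi(z)\in K(M,s)e_1$ for $z\in A\cap \B(e_1,\delta)$, which follows immediately from the two defining properties of admissibility combined with the trivial estimate $|1-z_1|\leq \|z-e_1\|$.
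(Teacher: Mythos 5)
Your argument is correct and is essentially the paper's own proof: admissibility with tolerance $\epsilon/2$ gives $M$ and the bound $k_{\B^n}(z,\pi(z))<\epsilon/2$, Proposition \ref{p0} handles the projected points $\pi(z)\in K(M,s)e_1$, and the triangle inequality combines the two. Your explicit check that $\pi(z)\in K(M,s)e_1$ (via $|1-z_1|\le\|z-e_1\|<\delta\le s$ together with the cone condition) is exactly the bookkeeping the paper performs implicitly.
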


\begin{proof}
Fix $\epsilon>0$ and $t\in (0,1)$. Let $\delta_1>0$ be such that $\pi(A\cap \B(e_1,\delta_1))\subset K(M,s_1)e_1$ for some $s_1\in (0,1)$ and $k_{\B^n}(z,\pi(z))\leq \epsilon/2$ for all $z\in A\cap \B(e_1,\delta_1)$. Let $s_0\in (0,1)$ be such that $k_{\B^n}(z, f(K(M,t)))\leq \epsilon/2$ for all $z\in K(M,s_0)e_1$. Let  $\delta:=\min\{t,s_0,\delta_1\}$. Let $z\in A\cap \B(e_1,\delta)\subset A\cap \B(e_1,\delta_1)$. Hence $\pi(z)\in  K(M,s_0)$ and
\[
k_{\B^n}(z,f(K(M,t)e_1))\leq k_{\B^n}(z, \pi(z))+k_{\B^n}(\pi(z),f(K(M,t)e_1)) <\epsilon,
\]
and the proof is completed.
\end{proof}

\begin{example}
In general, Corollary \ref{p1} is the best one can say, namely, the image of a holomorphic map close to a boundary regular fixed point can be very thin: just consider the simple example $f:\B^2\to \B^2$ given by $f(z_1,z_2)=(z_1,0)$. Even assuming univalency the situation does not improve:  the map $f(z_1,z_2)=(z_1, \frac{1}{4}z_2(1-z_1)^2)$ is a univalent self-map of $\B^2$ having a boundary regular fixed point at $e_1$ but its image does not contain eventually any cone with vertex at $e_1$.
\end{example}

The problem with the previous examples is that the Jacobian of the maps at $e_1$ is zero. In the next section we show that if this is not the case, then the range of the map close to the boundary point is ``fat''.

\subsection{Range close to boundary super-regular fixed points}

\begin{definition}
Let $f:\B^n\to \B^n$ be holomorphic. The point $e_1$ is  a {\sl boundary super-regular fixed point} of $f$ if it is a boundary regular fixed point of $f$ and moreover for each $M>1$ there exists $c=c(M)>0$ such that for any sequence $\{\zeta_k\}\subset K(M,2)$ which tends  to $1$ it holds $\liminf_{k\to \infty}|\det (df_{\zeta_k e_1})|\geq c$.
\end{definition}

\begin{remark}
Let $f:\B^n\to \B^n$ be holomorphic. Assume $e_1$ is a boundary regular fixed point of $f$. As shown in \cite[Example p.183]{Ru}, the radial limit of $\det (df_z)$ at $e_1$ may not exist. In fact, by Theorem \ref{RudinJWC} it follows that $\det df_z$ is bounded in every Kor\'anyi region and $\check{\hbox{C}}$irca's theorem
\cite[Theorem 8.4.8]{Ru} implies  that if $\lim_{r\to 1} \det (df_{re_1})$ exists, then $\angle_K\lim_{z\to e_1} \det df_z$ exists. If this is the case, $e_1$ is a boundary super-regular fixed point if and only if $\angle_K\lim_{z\to e_1} \det df_z\neq 0$.
\end{remark}

\begin{lemma}\label{Kobe}
Let $c>0$. Let $\mathcal F_c:=\{f: \B^n \to \B^n \hbox{\ holomorphic}: f(0)=0, |\det df_0|\geq c\}$. Let $t'\in (0,1)$. Then there exists $r'=r'(c,t')>0$ such that $\B(0,r')\subset f(\B(0,t'))$ for all $f\in \mathcal F_c$.
\end{lemma}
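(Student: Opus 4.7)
The plan is a normal-families argument by contradiction, closed off by a multidimensional Hurwitz/Rouch\'e step. Suppose the lemma fails for some $c>0$ and $t'\in(0,1)$; then there exist $f_k\in\mathcal{F}_c$ and points $w_k\to 0$ with $w_k\notin f_k(\B(0,t'))$ for every $k$. Since each $f_k$ takes values in $\B^n$, the family is uniformly bounded, hence normal by Montel; after passing to a subsequence one may assume $f_k\to f$ uniformly on compact subsets of $\B^n$. Cauchy estimates give $(df_k)_0\to df_0$, so $|\det df_0|=\lim_k|\det(df_k)_0|\geq c>0$, in particular $f$ is nonconstant. The plurisubharmonicity of $\|f\|^2$ and the maximum principle then force $f(\B^n)\subset\B^n$, and clearly $f(0)=0$.

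Since $df_0$ is invertible, the inverse function theorem yields $\epsilon>0$ with $\overline{\B(0,\epsilon)}\subset\B(0,t')$ such that $f$ restricts to a biholomorphism of $\B(0,\epsilon)$ onto an open neighborhood $V$ of $0$. Set $S:=\de\B(0,\epsilon)$. The compact set $f(S)$ is disjoint from $0$ (by injectivity and $f(0)=0$), so one may choose $\rho>0$ with $\overline{\B(0,4\rho)}\subset V$ and $f(S)\cap\overline{\B(0,4\rho)}=\emptyset$; in particular $\|f(z)\|\geq 4\rho$ for every $z\in S$. For all $k$ sufficiently large, $\sup_{z\in\overline{\B(0,\epsilon)}}\|f_k(z)-f(z)\|<\rho$.

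Fix such a large $k$ and any $w\in\B(0,\rho)$. Along the straight-line homotopy $F_s(z):=(1-s)f(z)+sf_k(z)$, for $z\in S$ and $s\in[0,1]$,
\[
\|F_s(z)-w\|\geq \|f(z)\|-\|w\|-s\|f_k(z)-f(z)\|>4\rho-\rho-\rho=2\rho>0.
\]
Hence $w\notin F_s(S)$ for all $s\in[0,1]$, and homotopy invariance of Brouwer degree gives
\[
\deg\bigl(f_k,\B(0,\epsilon),w\bigr)=\deg\bigl(f,\B(0,\epsilon),w\bigr)=1,
\]
the last equality because $f$ is a biholomorphism of $\B(0,\epsilon)$ onto $V\ni w$. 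Consequently $w\in f_k(\B(0,\epsilon))\subset f_k(\B(0,t'))$. This shows $\B(0,\rho)\subset f_k(\B(0,t'))$ for all sufficiently large $k$, contradicting $w_k\to 0$ together with $w_k\notin f_k(\B(0,t'))$. The only genuinely multidimensional ingredient is the degree/Rouch\'e step, which is what supplies the uniformity in $w$ that a pointwise Hurwitz argument would not immediately provide; everything else (normality, convergence of Jacobians, and the maximum-principle ruling out boundary contact of the limit) is routine.
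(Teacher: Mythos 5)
Your proof is correct and follows essentially the same route as the paper's: a contradiction argument via Montel normality, convergence of the differentials at $0$ so that the limit map again has $|\det df_0|\geq c$, and then transferring the openness of the limit at $0$ to the approximating maps. The only difference is that you make explicit, via the Brouwer-degree/Rouch\'e homotopy on $\de\B(0,\epsilon)$, the final step which the paper asserts without detail (that eventually $\B(0,\delta/2)\subset f_n(\B(0,t'))$); this fills in a genuine gloss but is not a different approach.
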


\begin{proof}
Assume this is not the case. Then for all $n\in \N$ such that $1/n<t'$ there exists $z_n\in \B(0,t')$ with $|z_n|<1/n$ and $f_n\in \mathcal F_c$ such that $z_n\not\in f(\B(0,t'))$. Since $\|f_n(z)\|<1$ for all $z\in \B^n$, and $f_n(0)=0$, $\{f_n\}$ is a normal family and we can assume it is uniformly convergent on compacta to a holomorphic map $f:\B^n \to \B^n$ such that $f(0)=0$. Since $d(f_n)_0\to df_0$, the map $f\in \mathcal F_c$.  Therefore there exists $\delta>0$ such that $\B(0,\delta)\subset f(\B(0,t'))$. Hence, eventually $\B(0,\delta/2)\subset f_n(\B(0,t'))$, which contradicts the choice of $\{z_n\}$.
\end{proof}

\begin{theorem}\label{ball0}
Let $f:\B^n\to \B^n$ be holomorphic. Assume that $e_1$ is a boundary super-regular fixed point of $f$. Then for every $\eta>0$ and $M>1$ there exist $s\in (0,1)$ and $r>0$ such that
\begin{equation}\label{claimB}
\bigcup_{\zeta \in K(M,s)} B_k(f(\zeta e_1), r))\subset f(\B^n\cap\B(e_1,\eta)),
\end{equation}
where $B_k(x,R)$ denotes the Kobayashi ball of center $x\in \B^n$ and radius $R>0$.
\end{theorem}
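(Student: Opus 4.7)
The strategy is to reduce the statement to the interior-point Koebe-type bound of Lemma \ref{Kobe} by conjugating $f$ with ball automorphisms. For each $\zeta\in K(M,s)$ (with $s$ to be chosen small), choose automorphisms $\varphi_\zeta,\psi_\zeta$ of $\B^n$ with $\varphi_\zeta(0)=\zeta e_1$ and $\psi_\zeta(f(\zeta e_1))=0$, and set $g_\zeta:=\psi_\zeta\circ f\circ\varphi_\zeta$, a holomorphic self-map of $\B^n$ with $g_\zeta(0)=0$. Since $\varphi_\zeta$ and $\psi_\zeta$ are isometries for $k_{\B^n}$, any inclusion of Euclidean balls at the origin produced by Lemma \ref{Kobe} will translate back into an inclusion of Kobayashi balls around $f(\zeta e_1)$ of the form required by \eqref{claimB}.

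The heart of the argument is a lower bound on $|\det d(g_\zeta)_0|$ which is uniform in $\zeta\in K(M,s)$. By the standard formula $|\det d\varphi_w|^2=\bigl((1-\|\varphi(w)\|^2)/(1-\|w\|^2)\bigr)^{n+1}$ for any automorphism $\varphi$ of $\B^n$, the chain rule gives
\[
|\det d(g_\zeta)_0|^2=\left(\frac{1-|\zeta|^2}{1-\|f(\zeta e_1)\|^2}\right)^{n+1}|\det df_{\zeta e_1}|^2.
\]
Super-regularity of $e_1$ provides $c=c(M)>0$ and $s_0>0$ with $|\det df_{\zeta e_1}|\geq c$ for every $\zeta\in K(M,s_0)$. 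For the remaining factor, Theorem \ref{RudinJWC}(2) applied to $f$, specialized along the real slice $z=\zeta e_1$ (which lies in every Kor\'anyi region), yields $(1-f_1(\zeta e_1))/(1-\zeta)\to\al_f(e_1)$ as $\zeta\to 1$ nontangentially, hence $|1-f_1(\zeta e_1)|\leq(\al_f(e_1)+1)|1-\zeta|\leq(\al_f(e_1)+1)M(1-|\zeta|)$ on $K(M,s)$ for $s$ small. Combining with $1-\|f(\zeta e_1)\|^2\leq 1-|f_1(\zeta e_1)|^2\leq 2\,|1-f_1(\zeta e_1)|$ gives $1-\|f(\zeta e_1)\|^2\leq C(1-|\zeta|^2)$ uniformly, whence $|\det d(g_\zeta)_0|\geq c'>0$ and $g_\zeta\in\mathcal F_{c'}$.

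Lemma \ref{Kobe} applied with $t'=1/2$ then produces $r'=r'(c',1/2)>0$ such that $\B(0,r')\subset g_\zeta(\B(0,1/2))$ for every $\zeta\in K(M,s)$. Using the explicit formula for $k_{\B^n}$, this is exactly $B_k(0,R')\subset g_\zeta(B_k(0,R))$ with $R=\tfrac{1}{2}\log 3$ and $R'=\tfrac{1}{2}\log\tfrac{1+r'}{1-r'}$; pulling this inclusion back through the Kobayashi-isometries $\varphi_\zeta,\psi_\zeta$ gives
\[
B_k(f(\zeta e_1),R')\subset f\bigl(B_k(\zeta e_1,R)\bigr),\qquad \zeta\in K(M,s),
\]
with $R,R'$ independent of $\zeta$. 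To conclude, shrink $s$ one final time so that $B_k(\zeta e_1,R)\subset\B(e_1,\eta)$ for all $\zeta\in K(M,s)$; this is possible because Kobayashi balls of fixed radius in $\B^n$ have Euclidean diameter tending to $0$ (like $(1-|z|)^{1/2}$ in complex-tangential directions) as the center approaches $\partial\B^n$. Taking the union over $\zeta\in K(M,s)$ yields \eqref{claimB} with $r:=R'$. The main obstacle is exactly the uniform lower bound on $|\det d(g_\zeta)_0|$: super-regularity alone controls only $|\det df_{\zeta e_1}|$, and without the JWC-type size estimate on $1-\|f(\zeta e_1)\|^2$ the conjugating factor $(1-|\zeta|^2)/(1-\|f(\zeta e_1)\|^2)$ could a priori degenerate as $\zeta\to 1$ inside the Stolz angle, spoiling the reduction to Lemma \ref{Kobe}.
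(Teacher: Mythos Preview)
Your argument is correct and follows the same overall architecture as the paper's proof: conjugate $f$ by ball automorphisms sending $\zeta e_1$ and $f(\zeta e_1)$ to $0$, obtain a uniform lower bound on the Jacobian of the conjugated map at $0$, invoke Lemma~\ref{Kobe}, and pull the resulting Euclidean inclusion back through the Kobayashi isometries, finally shrinking $s$ so that the preimage Kobayashi balls sit inside $\B(e_1,\eta)$.

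The one genuine difference is in how the Jacobian bound is obtained. The paper factors each conjugating automorphism as a parabolic automorphism (moving the point onto the real segment $(-1,1)e_1$) composed with a hyperbolic automorphism $\Phi_{t_j}$ (moving it to $0$), and then estimates the five factors of $\det dg^\zeta_0$ separately; this leads to the somewhat involved computation \eqref{bound} bounding $(\cosh t_1/\cosh t_0)^{n+1}$ from below. You instead appeal directly to the invariant identity $|\det d\varphi_w|^2=\bigl((1-\|\varphi(w)\|^2)/(1-\|w\|^2)\bigr)^{n+1}$ for automorphisms of $\B^n$, which collapses all the automorphism Jacobians into the single factor $\bigl((1-|\zeta|^2)/(1-\|f(\zeta e_1)\|^2)\bigr)^{(n+1)/2}$ and reduces the task to the elementary upper bound $1-\|f(\zeta e_1)\|^2\le 2|1-f_1(\zeta e_1)|\le C(1-|\zeta|^2)$ coming from Theorem~\ref{RudinJWC}(2). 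This is a cleaner route to the same destination; the paper's explicit decomposition, on the other hand, makes the role of the parabolic and hyperbolic pieces visible and ties in with the automorphism descriptions given earlier in the section. One small wording issue: $K(M,s)e_1$ does not lie in \emph{every} Kor\'anyi region, but it is admissible at $e_1$ (indeed nontangential along the normal slice), which is what is needed to invoke the $\angle_K$-limit in Theorem~\ref{RudinJWC}(2).
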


\begin{proof}
For $z\in \B^n$, let us set
\[
R(z)=\frac{|1-z_1|^2}{1-\|z\|^2}, \quad r(z)=\frac{1-R(z)}{1+R(z)}.
\]
Let $\zeta\in K(M,s)$, for some $s\in (0,1)$ to be chosen later. Let $S$ be a parabolic automorphism of $\B^n$ fixing $e_1$ such that $S(\zeta e_1)=r(\zeta e_1)e_1$. Next, let $\Phi_{t_0}$ be a hyperbolic automorphism of the form \eqref{hyper} with $t_0=\frac{1}{2}\log \frac{1+r(\zeta e_1)}{1-r(\zeta e_1)}=-\log R(\zeta e_1)^{1/2}$. Hence $\Phi_{t_0}(0)=r(\zeta e_1) e_1$.

let $T$ be a parabolic automorphism of $\B^n$ fixing $e_1$ such that $T(f(\zeta e_1))=r(f(\zeta e_1)) e_1$, and let $\Phi_{t_1}$ be a hyperbolic automorphism of the form \eqref{hyper} with $t_1=\frac{1}{2}\log \frac{1+r(f(\zeta e_1))}{1-r(f(\zeta e_1))}=-\log R(f(\zeta e_1))^{1/2}$. Hence $\Phi_{t_1}(0)=r(f(\zeta e_1))e_1$.

Now, let $g^\zeta:=  \Phi^{-1}_{t_1} \circ T\circ f\circ  S^{-1}\circ \Phi_{t_0}$. By construction, $g^\zeta:\B^n \to \B^n$ is holomorphic and $g^\zeta(0)=0$. Moreover,
\begin{equation}\label{differential}
\det dg^\zeta_0=\det (\Phi^{-1}_{t_1})_{r(f(\zeta e_1))e_1}\cdot \det dT_{f(\zeta e_1)} \cdot \det df_{\zeta e_1} \cdot \det dS^{-1}_{r(\zeta e_1)e_1}\cdot \det d(\Phi_{t_0})_0.
\end{equation}
Since $f$ is (uniformly) continuous on $\overline{K(M,s)}$ for any $s\in (0,1)$ and $\lim_{K(M,s)\ni \zeta \to 1}f(\zeta e_1)=e_1$, and since $\det dT_{e_1}=\det dS_{e_1}=1$, it follows that there exists $s_0\in (0,1)$ such that for all $s\in (0,s_0)$ and for each $\zeta \in K(M,s)$ it holds
\begin{equation}
\label{estimo1}
|\det dS_{\zeta e_1}|\leq 2, \quad |\det dT_{f(\zeta e_1)}|\geq \frac{1}{2}.
\end{equation}
Therefore, since $\det dS^{-1}_{r(\zeta e_1)e_1}=\det dS^{-1}_{S(\zeta e_1)}=(\det dS_{\zeta e_1})^{-1}$, it follows that
\begin{equation}
\label{estimo2}
|\det  dS^{-1}_{r(\zeta e_1)e_1}|\geq \frac{1}{2}.
\end{equation}
Also,
\begin{equation}
\label{estimo3}
\begin{split}
&\det (\Phi^{-1}_{t_1})_{r(f(\zeta e_1))e_1}=\det d(\Phi^{-1}_{t_1})_{\Phi_{t_1}(0)}=(\det (d\Phi_{t_1})_0)^{-1}=(\cosh t_1)^{n+1},\\ &\det d(\Phi_{t_0})_0=(\cosh t_0)^{-(n+1)}.
\end{split}
\end{equation}
Finally, taking into account that  $e_1$ is a boundary super-regular fixed point of $f$, there exists $s_1\in (0,s_0)$, and  $c>0$ such that $|df_{\zeta e_1}|\geq c$ for all $\zeta \in K(M,s)\subset K(M,2)$ for all $s\in (0,s_1)$. From \eqref{differential}, \eqref{estimo1}, \eqref{estimo2}, \eqref{estimo3} we obtain
\[
|\det dg^\zeta_0|\geq \frac{c}{4}\left(\frac{\cosh t_1}{\cosh t_0}\right)^{n+1}.
\]
A direct computation shows that, for $\zeta \in K(M,s)$,
\begin{equation}\label{bound}
\begin{split}
\frac{\cosh t_1}{\cosh t_0}&=\frac{1+R(f(\zeta e_1)}{1+R(\zeta e_1)}\cdot\left(\frac{R(\zeta e_1)}{R(f(\zeta e_1))}\right)^{1/2}
\\&=\frac{\frac{1-\|f(\zeta e_1\|^2}{1-|\zeta|^2}+\frac{|1-f_1(\zeta e_1)|^2}{1-|\zeta|^2}}{1+\frac{|1-\zeta|^2}{1-|\zeta|^2}}\frac{1-|\zeta|^2}{1-\|f(\zeta e_1)\|^2}\left(\frac{|1-\zeta|^2}{|1-f_1(\zeta e_1)|^2}\frac{1-\|f(\zeta e_1)\|^2}{1-|\zeta|^2}  \right)^{1/2}\\
&\geq
\frac{\frac{1-\|f(\zeta e_1\|^2}{1-|\zeta|^2}}{1+M|1-\zeta|}\frac{1-|\zeta|^2}{1-\|f(\zeta e_1)\|^2}\frac{|1-\zeta|}{|1-f_1(\zeta e_1)|}\left(\frac{1-\|f(\zeta e_1)\|^2}{1-|\zeta|^2}  \right)^{1/2}\\&
\geq \frac{1}{1+Ms}\frac{|1-\zeta|}{|1-f_1(\zeta e_1)|}\left(\frac{1-\|f(\zeta e_1)\|^2}{1-|\zeta|^2}  \right)^{1/2}
\end{split}
\end{equation}
Let $\al:=\al_f(e_1)$. Since $\al=\liminf_{z\to e_1}(1-\|f(z)\|)/(1-\|z\|)$, there exists $s_2\in (0,s_1)$ such that for all $s\in(0,s_1)$, it holds $\frac{1-\|f(\zeta e_1)\|^2}{1-|\zeta|^2}\geq \al/4$ for all $\zeta \in K(M,s)$.

Also, since $e_1$ is a boundary regular fixed point for $f$ by Theorem \ref{RudinJWC}.(2) it follows that there exists $s_3\in (0,s_2)$ such that for all $s\in (0,s_3)$, it holds $\frac{ |1-f_1(\zeta e_1)|}{|1-\zeta|}\leq 2\al$ for all $\zeta \in K(M,s)$.
Using the previous estimates in \eqref{bound}, we conclude that there exists $c'>0$ such that $\left(\frac{\cosh t_1}{\cosh t_0}\right)^{n+1}\geq c'$ and hence $|\det dg^\zeta_0|\geq c'':=cc'/4$.

Therefore, for all  $\zeta\in K(M,s_3)$, the map $g^\zeta\in \mathcal F_{c''}$, where $\mathcal F_{c''}$ is defined in Lemma \ref{Kobe}. Thus, by Lemma \ref{Kobe}, for every $t'\in (0,1)$ there exists $r'\in (0,1)$ such that $\B(0,r')\subset g^\zeta (\B(0,t'))$ for all $\zeta \in K(M,s_3)$.

Now,   for $a'>0$ set $a=\frac{1}{2}\log \frac{1+a'}{1-a'}$. Since automorphisms of $\B^n$ are isometries for $k_{\B^n}$ and $\B(0,a')=B_k(0,a)$, by the very definition of $g^\zeta$ it follows that for all $\zeta \in K(M,s_3)$
\[
B_k(f(\zeta e_1), r)=T^{-1}(\Phi_{t_1}(\B(0,r'))\subset f(S^{-1}(\Phi_{t_0}(\B(0,t')))=f(B_k(\zeta e_1, t)).
\]
Finally, let $\eta>0$ be as in claim \eqref{claimB} and choose $s\in (0,s_3)$ such that for each $\zeta \in K(M,s)$, it holds $B_k(\zeta e_1,t)\subset \B^n\cap \B(e_1,\eta)$. This can be done because the Euclidean diameter of a Kobayashi ball of fixed radius tends to zero when the center tends to $e_1$. Hence for any $\zeta \in K(M,s)$,
\[
B_k(f(\zeta e_1), r)\subset f(B_k(\zeta e_1, t))\subset f(\B^n\cap \B(e_1,\eta)),
\] and \eqref{claimB} is proved.
\end{proof}

As a consequence, we have that any subset $A\subset \B^n$ which is non-tangentially asymptotic at $e_1$ is eventually contained in the range close to a boundary super-regular fixed point:

\begin{theorem}\label{ball}
Let $f:\B^n\to \B^n$ be holomorphic. Assume that $e_1$ is a boundary super-regular fixed point of $f$. Then for every $\eta>0$ and for every $A\subset \B^n$ admissible at $e_1$  there exists $\delta>0$ such that $A\cap \B(e_1,\delta)\subset f(\B(e_1,\eta)\cap \B^n)$.
\end{theorem}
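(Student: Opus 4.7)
The plan is to combine Theorem \ref{ball0}---which, close to a boundary super-regular fixed point, supplies a Kobayashi ball of some fixed radius $r$ around every image point $f(\zeta e_1)$ inside $f(\B^n\cap\B(e_1,\eta))$---with Corollary \ref{p1}, which says that points of an admissible set $A$, once close enough to $e_1$, sit within Kobayashi distance $\epsilon$ of the radial image curve $f(K(M,t)e_1)$. Choosing the proximity tolerance $\epsilon$ in Corollary \ref{p1} smaller than the ball radius $r$ produced by Theorem \ref{ball0}, and with the Stolz parameter $t$ no larger than $s$, every point of $A$ near $e_1$ will lie in one of those balls, hence in $f(\B^n\cap\B(e_1,\eta))$.

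Concretely, I would proceed as follows. First, using that $A$ is admissible at $e_1$, fix once and for all a cone amplitude $M>1$ and a radius $\delta_0>0$ such that $\pi(A\cap\B(e_1,\delta_0))\subset C(M)$; condition (i) of admissibility is monotone in $\delta$ for $M$ held fixed, so this same $M$ remains valid upon any shrinking of $\delta$ afterwards. Next, apply Theorem \ref{ball0} with this $M$ and the given $\eta$ to obtain $s\in(0,1)$ and $r>0$ with
\[
\bigcup_{\zeta\in K(M,s)}B_k(f(\zeta e_1),r)\subset f(\B^n\cap\B(e_1,\eta)).
\]
Then invoke Corollary \ref{p1}, applied to the same admissible $A$ (so with the same $M$), with $\epsilon=r/2$ and $t=s$, to obtain $\delta_1>0$ such that every $z\in A\cap\B(e_1,\delta_1)$ satisfies $k_{\B^n}(z,f(\zeta e_1))<r/2$ for some $\zeta\in K(M,s)$.

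Finally, set $\delta=\min(\delta_0,\delta_1)$. For any $z\in A\cap\B(e_1,\delta)$, the previous step places $z$ inside $B_k(f(\zeta e_1),r)$ for some $\zeta\in K(M,s)$, and by Theorem \ref{ball0} this Kobayashi ball is contained in $f(\B^n\cap\B(e_1,\eta))$, yielding the desired inclusion. The only delicate point beyond this assembly is matching the cone parameter $M$ across the three invocations (admissibility of $A$, Theorem \ref{ball0}, Corollary \ref{p1}); fixing $M$ upfront from the admissibility of $A$ resolves this, since the defining conditions of admissibility are monotone in $\delta$ when $M$ is held fixed, so the same $M$ continues to serve inside Corollary \ref{p1}.
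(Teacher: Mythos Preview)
Your proposal is correct and follows essentially the same route as the paper's own proof: apply Theorem~\ref{ball0} to get $s,r>0$, then invoke Corollary~\ref{p1} with $\epsilon<r$ and $t=s$ so that every point of $A$ near $e_1$ falls inside one of the Kobayashi balls supplied by Theorem~\ref{ball0}. Your extra care in fixing the cone amplitude $M$ upfront from the admissibility of $A$, and noting that this $M$ persists under shrinking $\delta$, is a welcome clarification of a point the paper leaves implicit.
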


\begin{proof}
Let $\eta>0$. Let $s,r>0$ be given by Theorem \ref{ball0}. Let $0<\epsilon<r$. By Corollary \ref{p1} there exists $\delta>0$ such that $k_{\B^n}(z,f(K(M,s)e_1))<\epsilon$ for all $z\in A\cap \B(e_1,\delta)$. Therefore, any $z\in A\cap \B(e_1,\delta)$ is contained in a Kobayashi ball centered at some point of $f(K(M,s)e_1)$ and with radius at most $\epsilon<r$, hence by Theorem \ref{ball0}, it is contained in $f(\B^n\cap\B(e_1,\eta))$, as stated.
\end{proof}

\section{Strongly pseudoconvex domains}\label{spd}

Let $D\subset \C^n$ be a bounded strongly pseudoconvex domain with smooth boundary. Let $p\in \de D$. We denote by $\nu^D_p$ the outer unit normal vector to $\de D$ at $p$. For $z,w\in \C^n$, let $d(z,w)=\|z-w\|$. A {\sl cone} $C(p,M)$ with vertex $p$ and amplitude $M>1$ is defined as
\[
C(p,M):=\{z\in D: d(p,z)<M d(z,\de D)\}.
\]
We say that a sequence $\{z_k\}\subset D$ tends to $p$ {\sl normally non-tangentially} if $z_k\to p$ and for all $k\in \N$ there exists $\zeta_k\in \C$ such that $z_k=\zeta_k \nu^D_p$ and there exists $M>1$  such that $\{z_k\}\subset C(p,M)$.

\begin{definition}\label{super}
Let $D, D'\subset \C^n$ be two bounded strongly pseudoconvex domains with smooth boundary. Let $f: D\to D'$ be holomorphic. A point $p\in \de D$ is said to be a {\sl regular contact point} for $f$ if
\[
\liminf_{D\ni z\to p}\frac{d(f(z), \de D')}{d(z,\de D)}<+\infty.
\]
The point $p$ is said to be a {\sl super-regular contact point} for $f$ if it is a regular contact point for $f$ and moreover for every $M>1$ there exists $c=c(M)>0$ such that
\[
\liminf_{k\to \infty} |\det df_{z_k}|\geq c,
\]
for every sequence $\{z_k\}\subset C(M,p)$ which converges normally non-tangentially to $p$.
\end{definition}

In  \cite[Theorem 0.2]{A1} M. Abate proved that if $f:D\to D'$ is a holomorphic map between two bounded strongly pseudoconvex domains and $p\in \de D$ is a regular contact point, then an analogous of Rudin's theorem \ref{RudinJWC} holds. In the proof of our main Theorem \ref{main} we will make use of part of Abate's theorem. For the reader convenience we state here what we need from \cite[Theorem 0.2]{A1}:

\begin{theorem}[Abate]\label{AbateJWC}
Let $D, D'\subset \C^n$ be two bounded strongly pseudoconvex domains with smooth boundary. Let $f: D\to D'$ be holomorphic. Assume $p\in \de D$ is a regular contact point for $f$. Then there exists $q\in \de D'$ such that
\begin{enumerate}
  \item $\lim_{\in (0,1)\ni r\to 0}f(p-r\nu^D_p)=q$,
  \item $\lim_{\in (0,1)\ni r\to 0}\langle df_{p-r\nu^D_p}(\nu_p^D),\nu_q^{D'}\rangle$ exists finitely.
\end{enumerate}
\end{theorem}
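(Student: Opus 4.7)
The plan is to follow the strategy Abate developed to extend the Julia-Wolff-Carath\'eodory theorem to bounded strongly pseudoconvex domains. Fix a basepoint $z_0 \in D$ and introduce the \emph{big horospheres}
\[
E_D(p,R,z_0) := \Bigl\{z \in D : \limsup_{w \to p}\bigl(k_D(z,w) - k_D(z_0, w)\bigr) < \tfrac{1}{2} \log R\Bigr\},
\]
and define $E_{D'}(\cdot,\cdot,\cdot)$ analogously in $D'$. The crucial input is the Graham-type boundary asymptotic
\[
k_D(z,w) = -\tfrac{1}{2}\log d(z,\de D) - \tfrac{1}{2}\log d(w,\de D) + O(1),
\]
valid uniformly as $z,w\to\de D$ in any bounded strongly pseudoconvex domain with smooth boundary, together with the Kobayashi contraction $k_{D'}(f(z),f(w)) \leq k_D(z,w)$.

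To produce $q$, pick a sequence $w_k \to p$ realizing the liminf so that $d(f(w_k),\de D')/d(w_k,\de D) \to \alpha < +\infty$; passing to a subsequence one may assume $f(w_k) \to q\in \overline{D'}$, and the Graham estimate combined with Kobayashi contraction forces $q\in\de D'$. A now-standard Julia-type argument, obtained by comparing $k_D(z,w_k)-k_D(z_0,w_k)$ with $k_{D'}(f(z),f(w_k))-k_{D'}(f(z_0),f(w_k))$ and passing to the limit $k\to\infty$, yields the horosphere inclusion $f\bigl(E_D(p,R,z_0)\bigr) \subset E_{D'}(q,\alpha R, f(z_0))$ for every $R>0$; in particular $q$ depends only on $p$, not on the chosen sequence.

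For (1), the normal ray $\{p - r\nu_p^D : r \in (0,1)\}$ lies in a cone $C(p,M)$ for $M$ large and $r$ small, and a Euclidean computation using the Graham estimate shows that $E_{D'}(q,R,f(z_0)) \cap C(q,M')$ shrinks to $\{q\}$ in the Euclidean sense as $R\to 0^+$; since $f(p-r\nu_p^D)$ is eventually trapped in horospheres at $q$ with arbitrarily small parameter, it must converge to $q$. For (2), I would reduce to the one-variable Julia-Wolff-Carath\'eodory theorem. After composing with local biholomorphisms sending $p$ and $q$ to the distinguished boundary point of $\Ha^n$ and aligning the outer normals with the real direction of the first coordinate, the normal ray becomes a real half-line in $\Ha$, and the first coordinate $F_1$ of the transformed map $F$ sends $\Ha^n$ into $\Ha$. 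Restricting $F_1$ to the normal slice $\Ha \hookrightarrow \Ha^n$ yields a holomorphic self-map $h$ of $\Ha$ with a boundary regular fixed point at the origin by step (1); the classical one-variable Julia-Wolff-Carath\'eodory theorem applied to $h$ produces the non-tangential limit of $h'$ at the origin, which unwinds into the desired limit of $\langle df_{p-r\nu_p^D}(\nu_p^D),\nu_q^{D'}\rangle$.

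The main obstacle is ensuring that this one-dimensional reduction genuinely captures the normal-to-normal component of $df$. This requires the sharp infinitesimal asymptotic $\kappa_D(p-r\nu_p^D;\nu_p^D) \sim 1/(2r)$ for the Kobayashi metric along the normal direction, together with the infinitesimal Schwarz-Pick inequality, so that the tangential components of $df_{p-r\nu_p^D}(\nu_p^D)$ are seen to decay fast enough --- in fact as $O(r^{-1/2})$ --- and do not pollute the pairing with $\nu_q^{D'}$. It is precisely in this quantitative comparison of normal and tangential directions where strong pseudoconvexity of both $D$ and $D'$ is indispensable.
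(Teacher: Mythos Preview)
The paper does not contain a proof of this statement: Theorem~\ref{AbateJWC} is stated as a result of Abate, with a citation to \cite[Theorem 0.2]{A1}, and is used as a black box in the proof of Theorem~\ref{main}. There is therefore nothing in the paper to compare your proposal against.

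That said, your sketch is a reasonable outline of Abate's original strategy. A couple of cautions if you intend to flesh it out. First, in strongly pseudoconvex domains one has to distinguish the \emph{small} horospheres (defined with a $\liminf$) from the \emph{big} horospheres (defined with a $\limsup$); the Julia-type inclusion goes from small horospheres at $p$ into big horospheres at $q$, and part of the work is showing that both are squeezed between Euclidean horospheres up to constants, so that the inclusion still forces convergence along the normal. Your write-up uses only big horospheres, which on its own does not give the inclusion you need. Second, your argument for (2) via a local biholomorphism to $\Ha^n$ is heuristically right but delicate: such a straightening exists only locally and does not produce a genuine holomorphic self-map of $\Ha$; Abate's actual proof works intrinsically with the Kobayashi metric asymptotics you mention at the end rather than by reducing literally to the one-variable theorem. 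Your final paragraph correctly identifies the real content --- the $O(r^{-1/2})$ decay of the tangential components via the infinitesimal Schwarz--Pick lemma --- and that is indeed where strong pseudoconvexity enters.
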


Also, we will make use of the following recent result by the second named author with E. F. Wold and K. Diederich \cite{DFW}:

\begin{theorem}\label{embedding}
Let $D\subset \C^n$ be a bounded strongly pseudoconvex domain with smooth boundary and let $q\in \de D$. Then there exists a biholomorphism $\Phi$ from an open neighborhood of $\overline{D}$ such that $\Phi(D)\subset \B^n$ and $\Phi(q)=e_1$.
\end{theorem}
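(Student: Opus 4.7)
The plan is to build $\Phi$ in two stages: first a local straightening of $\de D$ at $q$ to a model half-space, then a global extension to a biholomorphism of a neighborhood of $\overline D$, and finally a normalization so that $\Phi(q)=e_1$.

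\emph{Local straightening.} Since $D$ is smooth and strongly pseudoconvex at $q$, there exist holomorphic coordinates $(w_1,w'')$ centered at $q$ in which a local defining function of $D$ has the form
\[
\rho(w) = -\Re w_1 + \|w''\|^2 + O(|w|^3), \qquad w = (w_1,w'')\in \C\times\C^{n-1}.
\]
Hence $D$ sits locally inside the Siegel half-space $\Ha^n = \{\Re w_1 > \|w''\|^2\}$ with $q$ corresponding to $0\in\de\Ha^n$. Composing with the inverse Cayley transform $C^{-1}$ yields a local biholomorphism $\psi$ on a neighborhood $U$ of $q$ with $\psi(D\cap U)\subset \B^n$ and $\psi(q)=-e_1$.

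\emph{Global extension.} The heart of the matter is to extend $\psi$ to a biholomorphism $\Psi$ defined on an open neighborhood of the full closure $\overline D$ with $\Psi(D)\subset \B^n$. Strong pseudoconvexity of $D$ gives a strictly plurisubharmonic defining function in a neighborhood of $\overline D$, so $\overline D$ is a Stein compact admitting a fundamental system of Stein neighborhoods; Oka--Weil approximation then allows one to approximate $\psi$ by maps defined on a neighborhood of $\overline D$. Andersen--Lempert-type flexibility for injective holomorphic maps and for holomorphic automorphism groups of $\C^n$ allows one to correct these approximations so as to preserve injectivity, producing a single biholomorphism $\Psi$ of an open neighborhood of $\overline D$. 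Strict pseudoconvexity is invoked once more to ensure that outside $U$ the image of $D$ stays uniformly inside $\B^n$, so that $\Psi(D)\subset\B^n$ with $\Psi(q)=-e_1$.

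\emph{Normalization and main obstacle.} Finally, post-composing with the automorphism $z\mapsto -z$ of $\B^n$ gives $\Phi(z):=-\Psi(z)$ with $\Phi(q)=e_1$ and $\Phi(D)\subset \B^n$, as required. The principal difficulty lies in the global extension: the local straightening at a single boundary point and global Oka--Weil approximation on Stein compacta are each classical in themselves, but producing one biholomorphism of a neighborhood of $\overline D$ that simultaneously realizes the prescribed local model at $q$, is globally injective, and sends \emph{all} of $D$ into $\B^n$ requires a careful interplay of Oka--Weil approximation with Andersen--Lempert flexibility; this is the content of \cite{DFW}.
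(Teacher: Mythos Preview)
The paper does not give its own proof of this theorem: it is stated as a result of Diederich, Forn\ae ss and Wold and simply cited from \cite{DFW}. So there is nothing in the paper to compare your argument against; what you have written is an outline of a proof of the cited external result, and you yourself acknowledge at the end that the substantive step ``is the content of \cite{DFW}.''

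Viewed as a sketch of the argument in \cite{DFW}, your outline names the right ingredients (local osculating coordinates at $q$, Stein neighborhoods of $\overline D$, Oka--Weil approximation, Andersen--Lempert flexibility), but one step is not correct as written. From
\[
\rho(w)=-\Re w_1+\|w''\|^2+O(|w|^3)
\]
you cannot conclude that $D\cap U\subset\Ha^n$: the cubic remainder may be negative, so locally $D$ can protrude outside $\{\Re w_1>\|w''\|^2\}$. One needs an additional normalization (e.g.\ a further holomorphic change of coordinates absorbing third-order terms, or a small scaling/push-in) to get $\psi(D\cap U)\subset\B^n$ with tangency at $\psi(q)$; this is exactly the delicate ``exposing'' part. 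Your global extension paragraph is correct in spirit but is a description of the strategy rather than a proof; in particular, the assertion that strict pseudoconvexity ensures the image outside $U$ stays uniformly inside $\B^n$ hides all of the work. Since the paper is content to quote \cite{DFW}, citing it (as you ultimately do) is appropriate here; just do not present the preceding paragraphs as a self-contained proof.
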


\begin{definition}
Let $D\subset \C^n$ be a bounded strongly pseudoconvex domain with smooth boundary. A subset $A\subset D$ such that $p\in \overline{A}$ is {\sl admissible} at $p\in \de D$ if for every $\epsilon>0$ there exist $\delta>0$ and $M>1$ such that
\begin{enumerate}
  \item $\langle z, \nu_p^D\rangle \nu_p^D\in  C(p,M)$ for all $z\in A\cap \B(p,\delta)$,
  \item $k_D(z,\pi(z))<\epsilon$ for all $z\in  A\cap \B(p,\delta)$,
\end{enumerate}
where $k_D$ denotes the Kobayashi distance in $D$.
\end{definition}

\begin{lemma}
Let $D\subset \C^n$ be a bounded strongly pseudoconvex domain with smooth boundary. Any cone $C(p,M)\subset D$ with vertex $p\in \de D$ and amplitude $M>1$ is admissible at $p$.
\end{lemma}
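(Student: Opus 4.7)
I verify the two admissibility conditions for $A = C(p,M)$, writing $\pi(z) := p + \langle z-p, \nu_p^D\rangle \nu_p^D$ for the orthogonal projection of $z$ onto the complex normal line through $p$ (which reduces to $\langle z, \nu_p^D\rangle\nu_p^D$ when $p$ lies on $\C\nu_p^D$, as in the ball case treated in Section \ref{theball}).

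For condition (1), note that $\pi(z)$ lies on the complex normal line at $p$, so a Taylor expansion of any local defining function for $D$ along this line yields $d(p+\zeta \nu_p^D, \de D) = |\zeta|(1+O(|\zeta|))$ for $\zeta \in \C$ with $\Re\zeta<0$ and $|\zeta|$ small. Hence $\|\pi(z) - p\|/d(\pi(z), \de D)\to 1$ as $\pi(z) \to p$. Since $\|\pi(z)-p\|\le \|z-p\|<\delta$, taking $\delta$ small enough forces $\pi(z)\in C(p,2)$, which verifies condition (1) with amplitude $M'=2$.

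For condition (2), the idea is to bound $k_D(z, \pi(z))$ by the Kobayashi--Royden length of a short tangential path. Choose local holomorphic coordinates near $p$ with $p = 0$ and $\nu_p^D = e_1$, so that $D$ is defined near $0$ by $\rho(z) = \Re z_1 + L(z'', \overline{z''}) + o(\|z\|^2) < 0$, with $L$ the Hermitian positive-definite Levi form on $\{0\}\times\C^{n-1}$. Let $\gamma(t) := (z_1, t z'')$ for $t\in [0,1]$, the straight segment joining $\pi(z) = (z_1,0)$ to $z = (z_1, z'')$ at constant normal coordinate. A direct computation gives $\rho(\gamma(t)) = \rho(z) + (t^2-1)L(z'', \overline{z''}) + o(\|z\|^2)$; since $L\ge 0$, $t^2\le 1$, and $|\rho(z)|\ge c_0 \|z-p\|/M$ for $z\in C(p,M)$ close to $p$, the segment stays in $D$ and $d(\gamma(t), \de D)\ge c_1\|z-p\|/M$. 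The velocity $\dot\gamma(t) = (0, z'')$ is purely in the complex-tangent direction at $p$, so the standard tangential Kobayashi--Royden upper estimate on strongly pseudoconvex domains (Graham-type bound; see \cite{A}),
\[
k_D(w; v) \le \frac{C\|v\|}{\sqrt{d(w, \de D)}}\qquad \text{for } v \text{ complex-tangential and } w \text{ near }\de D,
\]
combined with $\|z''\|\le \|z-p\|$, yields $k_D(\gamma(t); \dot\gamma(t)) \le C'\sqrt{M\|z-p\|}$. Integrating in $t\in [0,1]$ produces $k_D(z, \pi(z)) \le C'\sqrt{M\|z-p\|}<\epsilon$ as soon as $\delta<\epsilon^2/(MC'^2)$.

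The main non-trivial ingredient is the tangential Kobayashi--Royden upper estimate, which encodes the anisotropic boundary behavior of the metric (rate $d^{-1/2}$ in complex-tangent versus $d^{-1}$ in normal directions) near a strongly pseudoconvex boundary. This is classical; alternatively, one can transport it from $\B^n$ via the embedding $\Phi$ from Theorem \ref{embedding} and exploit the explicit form of the Kobayashi metric on the ball. Verifying that $\gamma$ stays in $D$ similarly hinges on strong pseudoconvexity through positivity of the Levi form.
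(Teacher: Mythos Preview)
Your approach works in outline, but there is a slip in condition (1). You claim $d(p+\zeta\nu_p^D,\de D)=|\zeta|(1+O(|\zeta|))$ for complex $\zeta$ with $\Re\zeta<0$; this is false. The direction $i\nu_p^D$ is real-orthogonal to $\nabla\rho(p)$, so $\rho(p+\zeta\nu_p^D)=|\nabla\rho(p)|\,\Re\zeta+O(|\zeta|^2)$ and hence $d(p+\zeta\nu_p^D,\de D)=-\Re\zeta+O(|\zeta|^2)$. Consequently $\|\pi(z)-p\|/d(\pi(z),\de D)\approx|\zeta|/(-\Re\zeta)$ does \emph{not} tend to $1$; already $z=p+(-t+it)\nu_p^D\in C(p,2)$ gives ratio $\sqrt{2}$, and points of $C(p,M)$ can realise any ratio up to roughly $M$. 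The repair is straightforward: for $z\in C(p,M)$ one has $-\rho(z)\geq c_0\|z-p\|/M$, and nonnegativity of the Levi term then forces $-\Re\zeta\geq c_0|\zeta|/M$, so the ratio is bounded and $\pi(z)\in C(p,M')$ for some $M'$ comparable to $M$ rather than $M'=2$. Your treatment of condition (2) via the Graham tangential upper bound is correct, with the minor caveat that $(0,z'')$ is only approximately complex-tangential at the boundary point nearest to $\gamma(t)$; the resulting normal component is $O(\|z\|^2)$ and contributes a harmless lower-order term after division by $d(\gamma(t),\de D)\geq c_1\|z-p\|/M$.

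The paper's proof takes a much shorter route that avoids Kobayashi--Royden estimates entirely: pick a Euclidean ball $B\subset D$ internally tangent at $p$, note that $C(p,M)$ is eventually contained in a cone in $B$ with vertex $p$, invoke Lemma~\ref{cono} to obtain admissibility in $B$, and conclude via the monotonicity $k_D\leq k_B$. Your direct argument has the merit of being self-contained (no reduction to the ball case) and of making the anisotropic $d^{-1/2}$ tangential rate explicit, at the cost of quoting Graham's estimate; the paper's reduction is slicker but hides the geometry behind the decrease of the Kobayashi distance under inclusion.
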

\begin{proof}
It is enough to note that if $B\subset D$ is a Euclidean ball tangent to $p$, then every cone $C(p,M)$ is eventually contained in $B$ and $C(p,M)\cap B$ is contained in a cone in $B$ with vertex $p$. Hence by Lemma \ref{cono} it is admissible in $B$. By the monotonicity property of the Kobayashi distance, $k_D\leq k_{B}$ and hence the result follows.
\end{proof}

Our main result is the following:

\begin{theorem}\label{main}
Let $D, D'\subset \C^n$ be two bounded strongly pseudoconvex domains with smooth boundary. Let $f: D\to D'$ be holomorphic. Assume $p\in \de D$ is a super-regular contact point for $f$. Then there exists a point $q\in \de D'$ such that for every $\eta>0$ and for every  $A\subset D'$ admissible at $q$ there exists $\delta>0$ such that $A\cap \B(q,\delta)\subset f(\B(p,\eta)\cap D)$.
\end{theorem}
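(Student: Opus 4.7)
The plan is to reduce Theorem \ref{main} to its unit-ball counterpart, Theorem \ref{ball}, via the embedding from Theorem \ref{embedding} applied to the target $D'$ together with an affine parametrization of an inscribed Euclidean ball in the source $D$ tangent to $\partial D$ at $p$. First I apply Abate's Theorem \ref{AbateJWC} to obtain $q\in\partial D'$ with $f(p-r\nu_p^D)\to q$ as $r\to 0^+$. Theorem \ref{embedding} provides a biholomorphism $\Psi$ from a neighborhood of $\overline{D'}$ with $\Psi(D')\subset\B^n$ and $\Psi(q)=e_1$; I post-compose $\Psi$ with a parabolic automorphism of $\B^n$ fixing $e_1$ (available from \eqref{para}) so that $d\Psi_q(\C\nu_q^{D'})=\C e_1$. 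By the $C^2$ smoothness and strong pseudoconvexity of $\partial D$ at $p$, pick an internally tangent Euclidean ball $B\subset D$ of radius $r_B$ with $p\in\partial B$; after a unitary rotation of $\B^n$, let $\sigma:\B^n\to B$ be the affine biholomorphism with $\sigma(e_1)=p$ and $\sigma((1-s)e_1)=p-s\,r_B\nu_p^D$ for $s\in[0,1]$. Define the holomorphic self-map
\[
F:=\Psi\circ f\circ\sigma:\B^n\to\B^n.
\]

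I then verify that $e_1$ is a boundary super-regular fixed point of $F$. The radial limit $F(re_1)\to e_1$ is immediate from $\sigma(re_1)=p-(1-r)r_B\nu_p^D\to p$ along the inward normal, Abate's theorem giving $f(\sigma(re_1))\to q$, and $\Psi(q)=e_1$. The regular-contact bound $\alpha_F(e_1)<+\infty$ follows by combining the comparisons $d(\sigma(z),\partial D)\asymp r_B(1-\|z\|)$ in cones at $e_1$ (tangency of $B$ with $\partial D$ at $p$) and $d(\Psi(w),\partial\B^n)\asymp d(w,\partial D')$ for $w$ near $q$ (bi-Lipschitz-ness of $\Psi$ up to the boundary) with the regular-contact hypothesis on $p$. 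For super-regularity, the chain rule gives
\[
|\det dF_z|=|\det d\sigma|\cdot|\det df_{\sigma(z)}|\cdot|\det d\Psi_{f(\sigma(z))}|;
\]
the outer factors are bounded below near the relevant boundary points, and for $z=\zeta e_1$ with $\zeta\in K(M,2)$, the point $\sigma(\zeta e_1)=p+r_B(\zeta-1)\nu_p^D$ lies on the complex normal line $p+\C\nu_p^D$ inside a cone $C(p,M')\subset D$, so the super-regular hypothesis at $p$ forces $|\det df_{\sigma(\zeta e_1)}|\geq c(M')$.

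Finally, given $A\subset D'$ admissible at $q$, I argue $\tilde A:=\Psi(A)$ is admissible at $e_1$ in $\B^n$. The cone condition transfers because $\Psi$ is bi-Lipschitz up to the boundary and, by the alignment, sends non-tangential cones at $q$ into non-tangential cones at $e_1$. For the Kobayashi condition, the monotonicity $k_{\B^n}\leq k_{\Psi(D')}$ and the isometry $k_{\Psi(D')}(\Psi(\cdot),\Psi(\cdot))=k_{D'}(\cdot,\cdot)$ give
\[
k_{\B^n}(\Psi(z),\pi(\Psi(z)))\leq k_{D'}(z,\pi_{D'}(z))+k_{\B^n}(\Psi(\pi_{D'}(z)),\pi(\Psi(z))),
\]
where the first summand is small by admissibility of $A$ and the second is controlled via a first-order Taylor expansion of $\Psi$ at $q$ yielding $\|\Psi(\pi_{D'}(z))-\pi(\Psi(z))\|=O(\|z-q\|^2)$, which is negligible compared to $d(\Psi(z),\partial\B^n)$ along admissible sequences. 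Applying Theorem \ref{ball} to $F$ and $\tilde A$ yields, for each $\eta_0>0$, a $\delta_0>0$ with $\tilde A\cap\B(e_1,\delta_0)\subset F(\B(e_1,\eta_0)\cap\B^n)=\Psi(f(\sigma(\B(e_1,\eta_0)\cap\B^n)))$. Choosing $\eta_0$ with $\sigma(\B(e_1,\eta_0))\subset\B(p,\eta)$ and $\delta$ with $\B(q,\delta)\subset\Psi^{-1}(\B(e_1,\delta_0))$ (possible by bi-Lipschitz-ness of $\sigma$ and $\Psi^{-1}$) gives the desired $A\cap\B(q,\delta)\subset f(\B(p,\eta)\cap D)$. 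The principal obstacle I foresee is the admissibility transfer: both the parabolic alignment of $d\Psi_q(\C\nu_q^{D'})$ with $\C e_1$ and the first-order comparison between the intrinsic projection $\pi_{D'}$ and the extrinsic projection $\pi$ of $\B^n$ require careful bookkeeping to ensure the Kobayashi approach condition is preserved.
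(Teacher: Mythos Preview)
Your overall strategy coincides with the paper's: inscribe a ball in $D$ tangent at $p$, embed $D'$ in $\B^n$ via Theorem~\ref{embedding}, and reduce to Theorem~\ref{ball}. However, there is a genuine gap in your verification that $e_1$ is a boundary \emph{regular} fixed point of $F$, i.e.\ that $\alpha_F(e_1)<+\infty$.

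Your argument combines the regular-contact hypothesis $\liminf_{D\ni z\to p}\frac{d(f(z),\partial D')}{d(z,\partial D)}<+\infty$ with the comparisons $d(\sigma(z),\partial D)\asymp 1-\|z\|$ and $d(\Psi(w),\partial\B^n)\asymp d(w,\partial D')$. The first problem is that the $\liminf$ in the hypothesis is taken over \emph{all} of $D$, whereas you only control sequences lying in the inscribed ball $B=\sigma(\B^n)$; restricting to $B$ can only \emph{increase} the $\liminf$, possibly to $+\infty$. The second problem is that the target-side comparison $d(\Psi(w),\partial\B^n)\asymp d(w,\partial D')$ is \emph{not} a consequence of bi-Lipschitz-ness of $\Psi$: since $\Psi(D')\subsetneq\B^n$, one only gets $d(\Psi(w),\partial\B^n)\geq c\,d(w,\partial D')$ in general, which is the wrong direction for bounding $\alpha_F(e_1)$ from above; two-sided comparability holds only for $w$ in a cone at $q$, and you have no a priori reason that $f(\sigma(re_1))$ approaches $q$ non-tangentially. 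The paper handles this step quite differently: it invokes Abate's Theorem~\ref{AbateJWC}(2) to obtain the finite radial limit $L$ of $\langle dg_{re_1}(e_1),e_1\rangle$, then applies the mean value theorem to the one-variable function $h(\zeta)=\langle g(\zeta e_1),e_1\rangle$ to deduce $\lim_{r\to 1}\frac{1-h(r)}{1-r}=L$, whence $\alpha_g(e_1)\leq|L|$. The paper's Remark after the proof stresses that bypassing Abate's theorem here is not known to be easy.

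A secondary issue: your admissibility transfer via the Taylor estimate $\|\Psi(\pi_{D'}(z))-\pi(\Psi(z))\|=O(\|z-q\|^2)$ requires $d\Psi_q$ to intertwine the orthogonal projections, i.e.\ to map $(\C\nu_q^{D'})^\perp$ into $(\C e_1)^\perp$; your parabolic alignment only guarantees $d\Psi_q(\C\nu_q^{D'})=\C e_1$, not preservation of the complements. The paper avoids this entirely: once $D'\subset\B^n$ with $q=e_1$, the tangent hyperplanes of $\partial D'$ and $\partial\B^n$ at $e_1$ coincide, so $\nu_{e_1}^{D'}=e_1$ and the two projections $\pi$ are literally the same map; then $k_{\B^n}(z,\pi(z))\leq k_{D'}(z,\pi(z))$ by monotonicity, and admissibility transfers in one line.
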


\begin{proof} Up to rotations and dilations, we can assume that $p=e_1$ and $\B^n\subset D$. Moreover, by Theorem \ref{embedding}, we can assume that $q=e_1$ and $D'\subset \B^n$. In particular, with these choices, $\nu_p^D=\nu_q^{D'}=e_1$. Hence, the map $g:=f|_{\B^n}:\B^n\subset D\to D'\subset \B^n$ is a holomorphic self-map of $\B^n$ and by Theorem \ref{AbateJWC}.(1), $\lim_{\in (0,1)\ni r\to 1}g(re_1)=e_1$.
We claim that $e_1$ is a boundary super-regular fixed point for $g$.  Since $e_1$ is a super-regular contact point for $f: D \to D'$, in order to show that $e_1$ is a boundary super-regular fixed point for $g$, we only need to show that
\begin{equation}\label{limo}
\liminf_{z\to e_1}\frac{1-\|g(z)\|}{1-\|z\|}<+\infty.
\end{equation}
By Theorem \ref{AbateJWC}.(2),  we have
\[
\lim_{(0,1)\ni r\to 1}\langle dg_{re_1}(e_1),e_1\rangle=\lim_{ (0,1)\ni r\to 0}\langle df_{p-r\nu^D_p}(\nu_p^D),\nu_q^{D'}\rangle=L,
\]
for some $L\in \C$. Let $h(\zeta):=\langle g(\zeta e_1), e_1\rangle$. Hence $h$ is a holomorphic self-map of $\D$ and $\lim_{r\to 1}h(r)=1$, $\lim_{r\to 1}h'(r)=\lim_{(0,1)\ni r\to 1}\langle dg_{re_1}(e_1),e_1\rangle=L$. By the mean value theorem applied to the real and imaginary part of $(0,1)\ni r\mapsto h(r)$, it follows that
\[
\lim_{r\to 1} \frac{1-h(r)}{1-r}=L.
\]
Therefore
\[
\begin{split}
\liminf_{z\to e_1}\frac{1-\|g(z)\|}{1-\|z\|}&\leq \liminf_{(0,1)\ni r\to 1}\frac{1-\|g(re_1)\|}{1-r}\leq \liminf_{(0,1)\ni r\to 1}\frac{1-|\langle g(re_1), e_1\rangle |}{1-r}\\& \leq \liminf_{(0,1)\ni r\to 1}\frac{|1-h(r) |}{1-r}=|L|<+\infty,
\end{split}
\]
as needed.

Now, note that $A$ is admissible at $e_1$ not only as a subset of $D$ but also as a subset of $\B^n$. Indeed, its projection into $\C e_1$ is eventually contained in a cone in $D$ and hence in $\B^n$, also, the Kobayashi distance is monotonic and then  $k_{\B^n}(z,\pi(z))\leq k_D(z,\pi(z))<\epsilon$ for all $z\in A$ close to $e_1$. Thus, we can apply Theorem \ref{ball} to $g$ and, since $g(\B^n\cap \B(e_1,\eta))\subset f(D\cap \B(e_1,\eta))$ for all $\eta>0$ we get also the result for $f$.
\end{proof}

\begin{remark}
On the one hand, it would be interesting to give a direct proof of Theorem \ref{main} without using the embedding Theorem \ref{embedding}, but using instead the full Abate's version of Rudin's theorem  for strongly pseudoconvex domains  and suitably adapting our  proof of Theorem \ref{ball}. However, aside using Rudin's theorem, our argument in the proof of Theorem \ref{ball} is strongly based on the existence of a family of good automorphisms of $\B^n$, and it is not clear how to bypass such an argument for strongly pseudoconvex domains.

On the other hand,  it would be interesting to prove \eqref{limo} without using Abate's Theorem \ref{AbateJWC}. If this were possible, the previous method would allow to prove Abate's version of Rudin's theorem  for strongly pseudoconvex domains directly by means of Rudin's theorem.
\end{remark}

\section{Images of regular contact points by univalent mappings}

\begin{theorem}\label{univalentone}
Let $D,D'\subset \C^n$ be  bounded strongly pseudoconvex domains with smooth boundary. Let $f: D \to D'$ be a univalent map and assume $x\in \de D$ is a super-regular contact point for $f$, and denote by $f(x)\in \de D'$ the non-tangential limit of $f$ at $x$. If $y\in \de D$ is a regular fixed point for $f$ then $f(x)\neq f(y)$.
\end{theorem}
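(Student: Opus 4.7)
My plan is to argue by contradiction: suppose $f(x)=f(y)=q$ and pick $\eta>0$ small enough that $\B(x,\eta)\cap\B(y,\eta)=\emptyset$. Theorem \ref{main} applied at the super-regular contact point $x$ states that for every set $A\subset D'$ admissible at $q$ there exists $\delta=\delta(A,\eta)>0$ with $A\cap\B(q,\delta)\subset f(\B(x,\eta)\cap D)$. So it is enough to exhibit a point $z\in\B(y,\eta)\cap D$ whose image $f(z)$ lies in $A\cap\B(q,\delta)$ for some admissible set $A$ at $q$: this gives a second preimage $w\in\B(x,\eta)\cap D$ of $f(z)$, and the disjointness of the two balls forces $w\neq z$, contradicting univalence.

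The obvious candidate is the normal ray $z_r:=y-r\nu_y^D$ together with the curve $A:=\{f(z_r):0<r<r_0\}$ for $r_0$ small. By Theorem \ref{AbateJWC} applied at the regular contact point $y$, $f(z_r)\to q$, so $A$ accumulates at $q$. The main obstacle is to verify that $A$ is itself admissible at $q$: the projection of $A$ onto the normal line at $q$ should lie in a cone $C(q,M)$, and $k_{D'}(f(z_r),\pi(f(z_r)))$ should tend to $0$ as $r\to 0^+$.

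To achieve this I would use the embedding strategy from the proof of Theorem \ref{main}. After rotation and dilation normalize so that $y=e_1$ and $\B^n\subset D$ is internally tangent at $e_1$, and compose Theorem \ref{embedding} with a unitary to get $\Psi$ with $\Psi(D')\subset\B^n$, $\Psi(q)=e_1$, and $d\Psi_q(\nu_q^{D'})$ a positive real multiple of $e_1$. Then $g:=\Psi\circ f|_{\B^n}:\B^n\to\B^n$ has $e_1$ as a boundary regular fixed point (by the argument in Theorem \ref{main}), with $\alpha:=\alpha_g(e_1)\geq 0$. The degenerate case $\alpha=0$ is ruled out by univalence: by Rudin's theorem \ref{RudinJWC}, part (3), together with the classical one-dimensional Julia--Wolff--Carath\'eodory theorem, the holomorphic self-map $h(\zeta):=\langle g(\zeta e_1),e_1\rangle$ of $\D$ satisfies $\alpha_h(1)=\alpha$, so $\alpha=0$ would force $h\equiv 1$ by the one-dimensional Julia lemma, hence $g\equiv e_1$ on $\D e_1$, hence $f$ constant on the analytic disk $\{\zeta e_1:\zeta\in\D\}\subset D$, contradicting univalence.

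With $\alpha>0$, Rudin's theorem \ref{RudinJWC} parts (2) and (5) applied along $z_r=(1-r)e_1$ yield $1-\langle g(z_r),e_1\rangle\sim r\alpha$ and $\|g(z_r)-\langle g(z_r),e_1\rangle e_1\|=o(r^{1/2})$; hence $1-\|g(z_r)\|\sim r\alpha$, and equation \eqref{special} gives $k_{\B^n}(g(z_r),\pi(g(z_r)))\to 0$, while the Stolz-region condition on $\pi(g(z_r))$ follows from classical one-dimensional Julia--Wolff--Carath\'eodory. So $\{g(z_r)\}$ is admissible at $e_1$ in $\B^n$. Transferring via $\Psi^{-1}$ --- which extends smoothly across the boundary and identifies $k_{D'}$ with $k_{\Psi(D')}$, the latter locally comparable to $k_{\B^n}$ near $e_1$ by the standard localization of the Kobayashi metric for strongly pseudoconvex domains --- gives admissibility of $A$ at $q$ in $D'$, and picking $r$ small enough that $z_r\in\B(y,\eta)$ and $f(z_r)\in A\cap\B(q,\delta)$ closes the contradiction.
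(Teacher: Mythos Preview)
Your strategy is the paper's: argue by contradiction, normalize $y=e_1$ with $\B^n\subset D$ internally tangent there, embed $D'\subset\B^n$ with $q=e_1$ via Theorem \ref{embedding}, show that $e_1$ is a boundary regular fixed point for $g:=f|_{\B^n}$ (as in the proof of Theorem \ref{main}), and verify that the image curve $\Gamma:r\mapsto g(re_1)$ is admissible at $e_1$. The paper establishes this last point by invoking \eqref{dist} together with the fact that $g$ sends Kor\'anyi regions into Kor\'anyi regions; your route through Theorem \ref{RudinJWC}(2),(5) and the one-dimensional Julia--Wolff--Carath\'eodory theorem amounts to the same computation.

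The one soft spot is your final transfer of admissibility from $\B^n$ back to $D'$. Since $\Psi(D')\subset\B^n$, monotonicity gives $k_{\Psi(D')}\geq k_{\B^n}$, which is the wrong inequality for your purpose, and moreover $\Psi$ does not literally intertwine the normal projections $\pi_q^{D'}$ and $\pi_{e_1}$. Both points can be handled with the sharp boundary estimates for the infinitesimal Kobayashi metric on strongly pseudoconvex domains, but the paper bypasses them entirely: having $\Gamma$ admissible \emph{in $\B^n$}, it picks an auxiliary Euclidean ball $B\subset D$ tangent to $\de D$ at $x$ and applies Theorem \ref{main} to $h:=f|_B$ viewed as a map $B\to\B^n$ (target $\B^n$, not $D'$). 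Then only $\B^n$-admissibility of $\Gamma$ is needed, and one obtains $\Gamma\cap\B(e_1,\delta)\subset h(B\cap\B(x,\eta))\subset f(D\cap\B(x,\eta))$ directly, contradicting univalence for small $\eta$. This is cleaner than pushing admissibility through $\Psi^{-1}$.

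A side remark: your separate treatment of $\alpha_g(e_1)=0$ is unnecessary. If $\alpha_g(e_1)=0$ then Julia's lemma for the ball forces $g\equiv e_1$, which already contradicts $g(\B^n)\subset\B^n$; no appeal to univalence is needed there.
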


\begin{proof}
Assume by contradiction that $f(x)=f(y)$. Up to rotations and dilations, we can assume that $y=e_1$ and $\B^n\subset D$. Moreover, by Theorem \ref{embedding}, we can assume that $f(x)=e_1$ and $D'\subset \B^n$. Let $g:=f|_{\B^n}:\B^n\subset D \to D'\subset \B^n$, seen as a holomorphic self-map of the unit ball. Arguing as in the proof of Theorem \ref{main}, we see that $e_1$ is a boundary regular fixed point for $g$. Hence, the curve $\Gamma:(0,1)\ni r\mapsto g(re_1)$ is admissible at $e_1$ by  \cite[Lemma 2.2]{Br}. In fact,  $g$ maps Kor\'anyi regions into Kor\'anyi regions  because $e_1$ is a boundary regular fixed point (see, {\sl e.g.}, \cite[Theorem 8.5.4]{Ru}), hence $(0,1)\ni r\mapsto \pi(g(r e_1))$ is non-tangential and it is asymptotic by  \eqref{dist}.

Let $B\subset D$ be an Euclidean ball tangent to $\de D$ at $p$. Consider $h=f|_{B}:B\subset D\to D'\subset \B^n$ as a holomorphic map from $B$ to $\B^n$. By Theorem \ref{main} it follows that for every $\eta>0$ there exists $\delta>0$ such that $\Gamma\cap \B(e_1,\delta)\subset h(B\cap \B(p,\eta))\subset f(D\cap \B(p,\eta))$. For $\eta$ small, this contradicts the univalency of $f$.
\end{proof}

In dimension one, regular contact points and super-regular contact points are one and the same, and Theorem \ref{univalentone} recovers \cite[Lemma 8.2]{CP}. In higher dimensions however, a univalent mapping can send two different regular contact points (but not super-regular) to the same point, as the following example shows:

\begin{example}\label{exa}
 Let $\v:\D \to \D$ be a holomorphic mapping with the following properties:
\begin{enumerate}
  \item $\v(\D)\subset \{\zeta\in \D: |\zeta-1|<1/2\}$,
  \item $\v$ extends smoothly up to the boundary and $\v(-1)=\v(1)=1$,
  \item there exist $0<r_{-1},r_1<1/2$ such that the two open discs $A=D(-1,r_{-1}), B=D(1,r_1)$ are disjoint and $\v$ restricted to $\D\setminus A$ and restricted to $\D\setminus B$ is univalent.
\end{enumerate}
Such a map can be constructed by taking the Riemann mapping from the unit disc to an open simply connected Riemann surface with smooth boundary in $\C^4$ which sits on a helicoid a project the image back to $\C$.

Now, let $C:\D \to \Ha:=\{\zeta\in \C: \Re \zeta>0\}$ be the Cayley transform $C(\zeta)=(1+\zeta)/(1-\zeta)$. Let $\phi:=C\circ \v \circ C^{-1}$. Set $R_0:=r_{-1}/(2-r_{-1})$, $R_\infty=(2-r_1)/r_1$ and let $D(x,R)$ denote the Euclidean disc of center $x\in \C$ and radius $R>0$. Then $U=C(A)=\Ha\cap D(0,R_0)$ and $V=C(B)=\Ha\setminus\overline{D(0,R_\infty)}$. The set $\phi(\Ha)$ is contained in $\{\zeta\in \C:\Re \zeta> 3\}$ and $\phi$ is univalent in $\Ha\setminus U$ and in $\Ha\setminus V$. Moreover, $\lim_{\Ha \ni \zeta\to 0}|\phi(\zeta)|=\lim_{\Ha \ni \zeta\to \infty}|\phi(\zeta)|=\infty$. Let
\[
\al:=\lim_{(0,\infty)\ni r\to \infty}\frac{\phi(r)}{r}.
\]
Such a limit exists, $\al>0$ and moreover $\Re \phi(\zeta)>\al \Re \zeta$ for all $\zeta\in \Ha$  by the Julia-Wolff-Carath\'eodory theorem (that is Theorem \ref{RudinJWC} for $n=1$ in its right-half plane formulation, see, {\sl e.g.} \cite[Corollary 1.2.12]{A}).

Let $\Ha^2:=\{(z,w)\in \C^2: \Re z>\|w\|^2\}$. For $\delta,\epsilon>0$ we define a map $\Phi:\Ha^2\to \C^2$ as follows
\[
\Phi(z,w)=\left(\phi(z), \frac{\sqrt{\al}\delta w}{1+\phi(z)}+\frac{\epsilon z}{1+z} \right).
\]
The map $\Phi$ is clearly holomorphic, and we claim that, for sufficiently small $\delta, \epsilon>0$ it maps $\Ha^2$ into $\Ha^2$ and it is univalent. Assume this is the case, let $C(z,w):=(1+z,w)/(1-z)$ be the Cayley transform from $\B^2$ onto $\Ha^2$. Let $f:= C^{-1} \circ\Phi\circ C$. Then $f:\B^2\to \B^2$ is a univalent map. Moreover, $\pm e_1$ are regular contact points for $f$. Indeed, $f_1(z,w)=\v(z)$ which is smooth at $\pm 1$ and $\v(\pm 1)=1$. Hence
\[
\liminf_{r\to 1}\frac{1-\|f(\pm re_1)\|}{1-r}\leq \liminf_{r\to 1}\frac{|1-\v(\pm r)|}{1-r}=|\v'(\pm 1)|<+\infty.
\]
Also,  by construction $f(\pm e_1)=e_1$.

We are left to show that we can find $\delta,\epsilon>0$ such that
\begin{enumerate}
  \item $\Phi(\Ha^2)\subset \Ha^2$,
  \item $\Phi$ is injective.
\end{enumerate}
(1) Let $(z,w)\in \Ha^2$. Since $|1+\phi(z)|\geq 1+\Re \phi(z)\geq 1+3=4$ and $|z|/(|1+z|)\leq 1$,
\[
\frac{\sqrt{\al}\delta |w|}{|1+\phi(z)|}\leq \frac{\sqrt{\al}\delta |w|}{4}, \quad \frac{\epsilon |z|}{|1+z|}\leq \epsilon.
\]
Let $\epsilon>0$ be such that $\epsilon^2< 3/4$, and let $\delta>0$ be such that $\delta<2$. Since $3<\Re \phi(z)$ and $\Re \phi(z)\geq \al \Re z$,
\begin{equation*}
\begin{split}
\left|\frac{\sqrt{\al}\delta w}{1+\phi(z)}+\frac{\epsilon z}{1+z}  \right|^2&\leq 2\left(\frac{\al\delta^2|w|^2}{|1+\phi(z)|^2}+
\frac{\epsilon^2 |z|^2}{|1+z|^2} \right)\leq 2\left(\frac{\al \delta^2|w|^2}{(1+3)^2}+\epsilon^2 \right)\\&
\leq \frac{\al |w|^2}{2}+\frac{3}{2}\leq \frac{\al |w|^2}{2}+\frac{\Re \phi(z)}{2}\leq \frac{\al \Re z}{2}+\frac{\Re \phi(z)}{2}\leq \Re \phi(z),
\end{split}
\end{equation*}
proving that $\Phi(z,w)\in \Ha^2$.

(2) Suppose $\Phi(z_0,w_0)=\Phi(z_1,w_1)$. If $z_0=z_1$ then clearly $w_0=w_1$, so assume $z_0\neq z_1$. Then $a:=\phi(z_0)=\phi(z_1)$ which, up to relabeling, implies $z_0\in U$ and $z_1\in V$. That is, $\|w_0\|^2\leq \Re z_0\leq |z_0|<R_0$ and $|z_1|>R_\infty$. Thus
\begin{equation*}
\frac{\sqrt{\al}\delta w_0}{1+a}+\frac{\epsilon z_0}{1+z_0}=\frac{\sqrt{\al}\delta w_1}{1+a}+\frac{\epsilon z_1}{1+z_1}.
\end{equation*}
Note that for all $(z,w)\in \Ha^2$,
\[
   \left|\frac{\sqrt{\al}\delta w}{1+\phi(z)}\right|^2=\frac{\al \delta^2 |w|^2}{|1+\phi(z)|^2}\leq \frac{\delta^2 \al\Re z}{|1+\phi(z)|^2}\leq \frac{\delta^2\Re \phi(z)}{|1+\phi(z))|^2}\leq \delta^2.
\]
Hence,
\begin{equation*}
\frac{\epsilon R_\infty}{1+R_\infty} - \delta \leq  \left|\frac{\epsilon z_1}{1+z_1}\right|-\left|\frac{\sqrt{\al}\delta w_1}{1+a}\right|\leq \left|\frac{\sqrt{\al}\delta w_1}{1+a}+\frac{\epsilon z_1}{1+z_1}\right|=\left|\frac{\sqrt{\al}\delta w_0}{1+a}+\frac{\epsilon z_0}{1+z_0}\right|\leq
\delta+\frac{\epsilon R_0}{1-R_0}.
\end{equation*}
Therefore,
\[
\epsilon\left(\frac{ R_\infty}{1+R_\infty}-\frac{ R_0}{1-R_0}\right) \leq 2\delta.
\]
But $\frac{ R_\infty}{1+R_\infty}-\frac{ R_0}{1-R_0}=1-r_1/2-r_{-1}/(2-2r_{-1})>1-1/2-1/2>0$, hence, if $\delta$ is sufficiently small the previous condition is never satisfied and the map is univalent.
\end{example}

\end{document}